\documentclass[12pt]{amsart}

\usepackage{amssymb}
\usepackage{amsmath}
\usepackage{amscd}
\usepackage{amsthm}
\usepackage{latexsym}

\newtheorem{thm}{Theorem}[section]
\newtheorem{prop}[thm]{Proposition}
\newtheorem{lem}[thm]{Lemma}
\newtheorem{cor}[thm]{Corollary}
\newtheorem{rem}[thm]{Remark}

\newtheorem{conjecture}[thm]{Conjecture}

\newcommand{\bs}{\boldsymbol}

\newcommand{\al}{\alpha}

\newcommand{\lam}{\lambda}

\newcommand{\vphi}{\varphi}

\newcommand{\lorarr}{\longrightarrow}

\newcommand{\bF}{\mathbb{F}}

\newcommand{\bZ}{\mathbb{Z}}

\newcommand{\mcF}{\mathcal{F}}

\newcommand{\mcP}{\mathcal{P}}
\newcommand{\mcQ}{\mathcal{Q}}

\DeclareMathOperator{\GL}{GL}
\DeclareMathOperator{\SL}{SL}
\DeclareMathOperator{\GU}{GU}
\DeclareMathOperator{\Cl}{Cl}
\DeclareMathOperator{\Irr}{Irr}

\setlength{\textwidth}{16cm} 
\setlength{\textheight}{22cm} 
\setlength{\topmargin}{0.5cm}
\setlength{\oddsidemargin}{0cm}
\setlength{\evensidemargin}{0cm}

\title[Character Degree Product and 
Conjugacy Length Product]
{The Character Degree Product and 
the Conjugacy Length Product for Finite 
General Linear Groups}

\author{Akihiko Hida}
\author{Masahiro Sugimoto}


\address{Akhiko Hida, Faculty of Education, 
Saitama University,
Shimo-okubo 255, Sakura-ku, Saitama-city, Saitama, Japan}
\email{ahida@mail.saitama-u.ac.jp}
\address{Masahiro Sugimoto, 
Department of Mathematics, University of Tsukuba, 
Tennodai 1-1-1, Tsukuba, Ibaraki, Japan}
\email{sugimoto-m@math.tsukuba.ac.jp}




\begin{document}
\maketitle
\begin{abstract}
Let $G$ be a finite group. K. Harada conjectured that 
the product of degrees of all irreducible characters of $G$ 
divides the product of lengths of all conjugacy classes of $G$.  
We verify this conjecture for finite general linear 
groups and finite unitary groups. 
\end{abstract}

\section{Introduction}

Let $G$ be a finite group. 
Let $\Cl(G)$ be the set of conjugacy classes of $G$ and
$\Irr(G)$ the set of irreducible characters of $G$. 
It is well known that $|\Cl(G)|=|\Irr(G)|$ and 
that the length $|K|$ and the degree $\chi(1)$ both 
divide the order $|G|$ of $G$ for each $K \in \Cl(G)$, 
$\chi \in \Irr(G)$. 
In \cite{Ha}, K. Harada proposed a conjecture
concerning the product of class lengths and the 
product of characetr degrees. 

\begin{conjecture}[{\cite[Conjecture II]{Ha}}]\label{HC}
Let 
\[
h(G)=
\frac{\prod_{K \in \Cl(G)}|K|}
{\prod_{\chi \in \Irr(G)} \chi(1)}
\]
for a finite group $G$. Then $h(G)$ is an integer.
\end{conjecture}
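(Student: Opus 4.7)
The plan is to verify the divisibility primewise: for each prime $p$ dividing $|G|$, show that
\[
v_p\!\left(\prod_{K \in \Cl(G)} |K|\right) \; \geq \; v_p\!\left(\prod_{\chi \in \Irr(G)} \chi(1)\right).
\]
Writing $|K| = |G|/|C_G(x_K)|$ for a representative $x_K \in K$, and using Brauer's defect $d(\chi)$ defined by $v_p(\chi(1)) = v_p(|G|) - d(\chi)$, both sides share a common $k(G)\, v_p(|G|)$ term since $|\Cl(G)| = |\Irr(G)| = k(G)$. The conjecture reduces to the inequality
\[
\sum_{\chi \in \Irr(G)} d(\chi) \; \geq \; \sum_{K \in \Cl(G)} v_p\bigl(|C_G(x_K)|\bigr).
\]

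My next step is to analyze both sums via $p$-block decomposition. On the character side, every $\chi \in \Irr(B)$ satisfies $d(\chi) \leq d(B)$, where $d(B)$ is the defect of the block $B$, so $\sum_\chi d(\chi)$ decomposes as $\sum_B \sum_{\chi \in \Irr(B)} d(\chi)$. On the class side, I would partition conjugacy classes by their $p$-section and use the fact that $|C_G(x_K)|$ is controlled by the centralizer of the $p$-part together with a $p$-regular class in that centralizer. The goal would be to match contributions block by block via Brauer's first main theorem, passing to $p$-local subgroups $N_G(D)$ with $D$ a defect group, and then to run an induction on $|G|$.

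The hard part, and the reason Harada's conjecture remains open, is that $h(G)$ is not multiplicative in any useful sense: the function interacts badly with direct products (the formula for $h(G_1 \times G_2)$ does not split termwise) and with normal subgroups, so a clean reduction to quasi-simple groups is not available. A realistic attack therefore invokes the classification of finite simple groups, verifies a strengthened form of the primewise inequality for every quasi-simple group by a case-by-case analysis---of which the families $\GL_n(q)$ and $\GU_n(q)$ treated in this paper form one substantial piece---and then attempts an extension via Clifford theory along a composition series. Even this program stumbles at the Clifford-theoretic step, because character degrees and centralizer orders in an extension $1 \to N \to G \to G/N \to 1$ intertwine in ways that do not respect the inequality termwise. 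Overcoming this intertwining, together with the combinatorial bookkeeping for exceptional simple groups, is in my view the central obstacle to a proof of the full conjecture; absent new ideas, the most one can realistically hope for within this framework is a proof for large classes of groups (as the present paper achieves for $\GL_n(q)$ and $\GU_n(q)$) rather than in full generality.
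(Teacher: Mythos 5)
The statement you were asked to prove is Conjecture \ref{HC} itself --- Harada's Conjecture II --- which is open; the paper does not prove it and does not claim to, establishing it only for $\GL_n(q)$ and $\GU_n(q)$ (Theorems \ref{HCGL} and \ref{HCU}). Your submission is, accordingly, not a proof. The opening reduction is correct: for each prime $p$, writing $|K|=|G|/|C_G(x_K)|$ and $v_p(\chi(1))=v_p(|G|)-d(\chi)$ and using $|\Cl(G)|=|\Irr(G)|$ does reduce the conjecture to the inequality $\sum_{\chi}d(\chi)\geq \sum_K v_p(|C_G(x_K)|)$ for every prime $p$. But everything after that is a program sketch whose key steps (matching $p$-sections to blocks, passing to $N_G(D)$, the Clifford-theoretic induction along a composition series) are never carried out, and you yourself concede they cannot currently be carried out; honest as that is, it leaves the statement unproved. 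One factual correction: $h$ \emph{does} behave well under direct products, since classes and characters of $G_1\times G_2$ factor as pairs, giving $h(G_1\times G_2)=h(G_1)^{k(G_2)}\,h(G_2)^{k(G_1)}$, which is an integer whenever $h(G_1)$ and $h(G_2)$ are; the genuine obstruction is extensions with a normal subgroup, not direct products.

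For comparison, the paper's actual verification for $\GL_n(q)$ and $\GU_n(q)$ uses none of this block-theoretic machinery. It works from Green's and Ennola's explicit parametrizations of classes and characters by partition-valued functions $\bs{\lam}$ on irreducible polynomials, pairs the class $K_{\bs{\lam}}$ with the character of the same label, shows that the $q'$-part of $|K_{\bs{\lam}}|/d_{\bs{\lam}}$ is already an integer by a hook-length divisibility, and reduces the $q$-part to the purely combinatorial inequality $N(n)\leq \frac{1}{6}n(n+1)p(n)$ for $N(n)=\sum_{\lam\vdash n}n(\lam)$ (Theorem \ref{t.Nn}), proved by explicit bijections on partitions. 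If your aim is to contribute to the cases the paper treats, that explicit, non-local route is the one to engage with; the general conjecture remains beyond both your outline and the paper.
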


It is clear that if $G$ is abelian then the conjecture holds 
since $|K|=\chi(1)=1$ for evrey 
$K \in \Cl(G)$ and $\chi \in \Irr(G)$. 
N. Chigira verified the conjecture for some finite 
simple groups including every sporadic simple groups.
The conjecture holds for symmetric groups and 
alternating groups \cite{Hi}. 
On the other hand, it has not been proved for 
solvable groups \cite[Conjecture 8.5]{N}. 
For a motivation and a background of the conjecture, see 
\cite[section 3]{Ha}. 
In this paper, we prove the conjecture for 
finite general linear groups $\GL_n(q)$ and finite unitary groups 
$\GU_n(q)$ over the field $\bF_q$ of 
$q$ elements. The following theorems are 
the main results of this paper.

\begin{thm}\label{HCGL}
Conjecture \ref{HC} holds for $\GL_n(q)$.  
\end{thm}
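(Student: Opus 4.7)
My plan is to use Lusztig's Jordan decomposition of characters together with the ordinary Jordan decomposition of conjugacy classes to factor $h(G)$ as a product indexed by semisimple conjugacy classes, and then to reduce the resulting integrality question to a combinatorial identity on partitions via the standard formulas for unipotent character degrees and unipotent centralizer orders in general linear groups.

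For $G=\GL_n(q)$, every irreducible character has the form $\chi_{s,\psi}$ where $[s]$ is a semisimple conjugacy class and $\psi$ is a unipotent character of $C_G(s)$, and Lusztig's theorem gives $\chi_{s,\psi}(1)=[G:C_G(s)]_{p'}\psi(1)$; dually every class of $G$ has the form $K_{s,u}$ with $u$ unipotent in $C_G(s)$, and $|K_{s,u}|=[G:C_G(s)]\cdot|u^{C_G(s)}|$. Multiplying over all characters and all classes yields
\[
h(G)=\prod_{[s]}[G:C_G(s)]_p^{\,r(s)}\cdot h^u(C_G(s)),\qquad h^u(H):=\frac{\prod_{[u]}|u^H|}{\prod_\psi \psi(1)},
\]
where $r(s)$ is the common number of unipotent classes and unipotent characters of $C_G(s)$. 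Since $C_G(s)$ is a direct product of groups of the form $\GL_m(q^d)$ and $h^u$ is multiplicative under direct products in the exponential sense $h^u(H_1\times H_2)=h^u(H_1)^{r(H_2)}h^u(H_2)^{r(H_1)}$, the analysis reduces to understanding $h^u(\GL_m(q^d))$ for arbitrary $m,d$.

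The next step is to express $h^u(\GL_m(q^d))$ explicitly, using Macdonald's formula for the centralizer order $|C_{\GL_m(q^d)}(u_\mu)|$ and Steinberg's hook-length formula for the unipotent character degree $\psi_\mu(1)$; both are products of $q$-polynomials indexed by $\mu\vdash m$. Integrality of $h(G)$ is then equivalent to $v_\ell(h(G))\ge 0$ for every prime $\ell$ dividing $|G|$. For $\ell=p$ (the defining characteristic of $\bF_q$), the $q$-powers $[G:C_G(s)]_p^{\,r(s)}$ must cancel the $q$-denominators coming from $h^u$; the needed estimate amounts to comparing $\sum_\mu n(\mu)$ with the $p$-part of $[G:C_G(s)]$. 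For $\ell\ne p$, the valuations $v_\ell(q^k-1)$ are controlled by the multiplicative order of $q$ modulo $\ell$, and the question reduces, cyclotomic polynomial $\Phi_d$ by cyclotomic polynomial, to showing that for each $d\ge 1$ the total number of hook lengths divisible by $d$ across the partitions indexing the unipotent data dominates the corresponding multiplicity contribution $\sum_i\lfloor m_i(\mu)/d\rfloor$ from the centralizer formulas.

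The main obstacle is that $h^u(\GL_m(q))$ is generally \emph{not} an integer: one computes directly, already for $m=3$, that $h^u(\GL_3(q))=(q^2-1)(q^3-1)^2/q^3$, which fails to be integral. Consequently no naive bijective proof can succeed on a single semisimple class; the positive contributions from $[G:C_G(s)]_p^{\,r(s)}$ (for $\ell=p$) and the cancellation across different $[s]$'s (for $\ell\ne p$) are essential. I expect the heart of the argument to lie in identifying a clean combinatorial identity, uniform in $q$, that packages these mutual cancellations between semisimple classes into a single statement about hook-length sums and part multiplicities.
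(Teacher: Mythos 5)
Your reduction is sound as far as it goes---the factorization of $h(G)$ over semisimple classes via Jordan decomposition is correct and is essentially equivalent to the paper's starting point, since Green's parametrization by partition-valued functions $\bs{\lam}$ on $\mcF_q$ already encodes the pair (semisimple part, unipotent part), and the paper's $\Omega(\bs{\lam})$ is exactly the $p$-valuation of one class/character ratio. But the proposal stops precisely where the actual work begins. You correctly observe that $h^u$ of a single Levi factor is not integral and that global cancellation at the prime $p$ is required, but you supply no mechanism for it; ``I expect the heart of the argument to lie in identifying a clean combinatorial identity'' is an acknowledgment of the gap, not a proof. The paper's Sections 3--4 are devoted entirely to this point: the inequality $\sum_{\bs{\lam}}\Omega(\bs{\lam})\ge 0$ is reduced to bounding $N(n)=\sum_{\lam\vdash n}n(\lam)$, and the needed estimates ($N(n)\le\frac16 n(n+1)p(n)$ via a four-way decomposition of $\mcP_n$ and explicit injections; the convexity bound $\binom{n}{2}\ge\sum_i\binom{\mu_i+1}{2}$; and, crucially, the treatment of the exceptional functions $\al\in M_n^{(1)}(\mcF_q)\cup M_n^{(n)}(\mcF_q)$, where the termwise inequality fails and one must trade the deficit against $(q-1)p(n)+|\mcF_{q,n}|-1$ using lower bounds on the number of irreducible polynomials) have no counterpart in your sketch. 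Without an argument of this kind the proof is not complete.

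A secondary inaccuracy: you assert that for $\ell\ne p$ the integrality requires cancellation across different semisimple classes. It does not. For $\ell\ne p$ the ratio is integral \emph{termwise}: with the natural pairing of the unipotent class $u_\mu$ with the unipotent character $\psi_\mu$, the $q'$-part of $|K_{\bs{\lam}}|/d_{\bs{\lam}}$ equals $\prod_f\tilde H_{\bs{\lam}(f)}(q^{d(f)})/\prod_i\psi_{m_i(\bs{\lam}(f))}(q^{d(f)})$, which is an integer because $\prod_i\psi_{m_i(\lam)}(q)$ is the sub-product of $\tilde H_\lam(q)$ over the cells at the right end of each row. So all the difficulty is concentrated at $\ell=p$, and that is the part your proposal leaves open.
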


\begin{thm}\label{HCU}
Conjecture \ref{HC} holds for $\GU_n(q)$.  
\end{thm}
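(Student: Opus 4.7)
The plan is to reduce Theorem~\ref{HCU} to Theorem~\ref{HCGL} via Ennola duality. The conjugacy classes and the irreducible characters of $\GU_n(q)$ are parameterized by the same type of combinatorial data (partition-valued functions on certain Frobenius orbits) that parameterize those of $\GL_n(q)$. Under these parameterizations, each conjugacy class length and each irreducible character degree is the value at $q$ of a universal polynomial; moreover, Ennola's observation asserts that the polynomial attached to a given label $\lam$ for $\GU_n(q)$ is obtained from the polynomial attached to the corresponding label for $\GL_n(q)$ by the formal substitution $q \mapsto -q$, up to an explicit sign.

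First I would upgrade Theorem~\ref{HCGL} from an integer assertion at each prime power $q$ to a polynomial identity: introduce the generic rational function
\[
h_n(T) \;=\; \frac{\prod_{\lam} f_\lam(T)}{\prod_{\lam} g_\lam(T)},
\]
where $f_\lam$ and $g_\lam$ are the universal polynomials for the class lengths and the character degrees respectively, and show that $h_n(T) \in \bZ[T]$. The proof of Theorem~\ref{HCGL} presumably proceeds by matching cyclotomic factors $\Phi_d(T)$ in the numerator and denominator; I would recast its divisibility arguments so as to yield a polynomial identity rather than a mere specialization identity at each prime power $q$.

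Next, invoking Ennola's correspondence at the level of parameterizations, I would establish the identity
\[
h(\GU_n(q)) \;=\; \vep \cdot h_n(-q)
\]
for some sign $\vep \in \{\pm 1\}$. Since $h(\GU_n(q))$ is the quotient of two positive integers, $\vep$ must equal $+1$, and integrality of $h(\GU_n(q))$ then follows immediately from $h_n(T) \in \bZ[T]$ evaluated at $T = -q$.

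The main obstacle is the careful bookkeeping of Ennola's bijection: one must check that the correspondence between labels for $\GL_n(q)$ and $\GU_n(q)$ is a genuine bijection covering every class and every character without omission or duplication, and that the individual signs attached to class lengths and character degrees aggregate into a single overall sign that cancels when one forms the ratio $h$. A secondary technical point is confirming that $h_n(T)$ is a polynomial with \emph{integer} coefficients, and not merely an integer-valued rational function; if polynomiality of $h_n(T)$ cannot be achieved directly, a fallback is to perform the divisibility analysis one cyclotomic piece at a time, transporting each divisibility from $\GL_n(q)$ to $\GU_n(q)$ by the $q \mapsto -q$ substitution with its associated sign.
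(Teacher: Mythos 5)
There is a genuine gap, and it is fatal to the proposed reduction. Your central object $h_n(T)$ does not exist: $h(\GL_n(q))$ is a product of ratios indexed by $M_n(\mcF_q,\mcP)$, a set whose cardinality (the class number of $\GL_n(q)$) grows with $q$, so the number of factors in your product depends on $q$ and $h(\GL_n(q))$ grows super-polynomially in $q$ (already $h(\GL_2(q))$ is a product of $q^2-1$ ratios each of size roughly $q^2$). Hence $h_n(T)\in\bZ[T]$, or even $h_n(T)\in\bQ(T)$, is not a meaningful target, and there is no polynomial identity to specialize at $T=-q$. The second fatal point is the one you yourself flag as "the main obstacle": the label sets for $\GL_n(q)$ and $\GU_n(q)$ are partition-valued functions on $\mcF_q$ and on $\mcF_q^U$ respectively, and these sets have \emph{different} cardinalities degree by degree ($|\mcF_{q,1}|=q-1$ versus $|\mcF^U_{q,1}|=q+1$, and $|\mcF^U_{q,2}|=|\mcF_{q,2}|-1$). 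Consequently the two groups have different numbers of conjugacy classes (e.g.\ $\GL_2(2)\cong S_3$ has $3$ classes while $\GU_2(2)$ has $9$), so no bijection between labels "without omission or duplication" exists, and the identity $h(\GU_n(q))=\pm h_n(-q)$ cannot hold even formally.

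Ennola duality is genuinely used in the paper, but only at the level of a \emph{single} label: the class size and character degree attached to $\bs{\lam}:\mcF_q^U\to\mcP$ are given by the $\GL$ formulas $a_{\bs{\lam}(f)}$, $b_{\bs{\lam}(f)}$ evaluated at $(-q)^{\deg f}$, up to sign. This transfers the $q'$-part integrality of $|K_{\bs{\lam}}^U|/d_{\bs{\lam}}^U$ label by label exactly as in the $\GL$ case, which is the part of your "cyclotomic piece at a time" fallback that does work. But the $q$-part requires the global inequality $\sum_{\bs{\lam}}\Omega^U(\bs{\lam})\ge 0$, and because the polynomial counts $|\mcF^U_{q,n}|$ differ from $|\mcF_{q,n}|$ in degrees $1$ and $2$, the paper must rerun the entire combinatorial estimate of Sections 3--4 with $\mcF_q^U$ in place of $\mcF_q$, substituting Corollary \ref{l.gu}(2) for Corollary \ref{l.gu}(1) and checking the additional exceptional cases $(n,q)=(2,2),(2,3),(3,2)$ by computer. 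None of this is obtainable by a formal $q\mapsto-q$ substitution in the $\GL$ result.
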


The conjugacy classes and irreducible characters of 
$\GL_n(q)$ are parametrized 
by partition valued functions $\bs{\lam}$ defined on 
the set of monic irreducible polynomials in $\bF_q[t]$ (other than $t$). 
Let $K_{\bs{\lam}}$ (resp. $d_{\bs{\lam}}$) be the 
conjugacy class (resp. the degree of the character) 
corresponding to $\bs{\lam}$. 
For a rational number $a=q^mr \in \bZ[q^{-1}]$ where 
$m,r \in \bZ$ and $(q,r)=1$, 
let $|a|_{q'}=r$ and we call $r$ a $q'$-part of $a$.   
Moreover, let 
$v_q$ be a valuation such that 
$v_q(q^mr)=m$. 
In \cite[Theorem 1.2]{S}, the second author proved that  
\[
\frac{|K_{\bs{\lam}}|_{q'}}{(d_{\bs{\lam}})_{q'}} 
\]
is an integer (see section 2 below). 
In particular, 
$h(G) \in \bZ[q^{-1}]$ and Conjecture \ref{HC} is equivalent to 
$v_q(h(\GL_n(q))) \geq 0$. 
In \cite[Theorem 1.3]{S}, it was proved that 
$v_q(h(\GL_n(q)))$ is a polynomial in $q$ with positive 
leading coefficient, in particular, 
Conjecture \ref{HC} holds for sufficiently large $q$. 
In this paper, we show that 
$v_q(h(\GL_n(q))) \geq 0$ for any $n$ and $q$.

In the proof of Theorem \ref{HCGL}, we need some combinatorial 
results on partitions of integers. 
For a partition $\lam=(\lam_1,\lam_2, \dots, \lam_l)$ 
of $n$, 
let 
\[n(\lam)=\sum_{i=1}^l(i-1)\lam_i.\]
We consider the sum $N(n)=\sum_{\lam \vdash n}n(\lam)$ 
of all values $n(\lam)$ $(\lam \vdash n)$. 
We need some upper bounds of $N(n)$. In particular 
we prove (Theorem \ref{t.Nn})
\[N(n) \leq \frac{1}{6}n(n+1)p(n)\]
where $p(n)$ is the number of partitions of $n$.  

In section 2, we state the main results more precisely. 
In section 3, we consider some upper bounds of 
$N(n)$. 
In section 4, we prove Theorem \ref{HCGL}. 
Finally, we prove Theorem \ref{HCU} in section 5.

%
%
%
%

\section{Conjugacy classes and irreducible characters of 
$\GL_n(q)$}

Let $\mcP$ be the set of all partitions of integers and  
$\mcP_n$ the set of all partitions of $n$. We set $p(n)=|\mcP_n|$. 

Let $G=\GL_n(q)$. Then 
$|G|=q^{\frac{1}{2}n(n-1)}\psi_n(q)$, where 
$\psi_m(t)=(t-1)(t^2-1)\cdots(t^m-1)$. 
Irreducible characters of $G$ were determined by Green \cite{G}. 
Let $\mcF_q$ be the set of monic irreducible polynomial over $\bF_q$ 
except for $t$. 
Let $\mcF_{q,n}$ be the set of polynomials in $\mcF_q$ of degree $n$.  
Let  
\[
M_n(\mcF_q, \mcP)=
\{\bs{\lam}: \mcF_q \lorarr \mcP~|~||\bs{\lam}||=n\}
\]
where $d(f)=\deg(f)$ and 
\[||\bs{\lam}||=\sum_{f \in \mcF_q}d(f)|\bs{\lam}(f)|.\]

Conjugacy classes of $G$ correspond to the functions in 
$M_n(\mcF_q, \mcP)$. The order of the centralizer of an element 
in the class corresponding to 
$\bs{\lam} \in M_n(\mcF_q, \mcP)$ is 
\[
\prod_{f \in \mcF_q}
a_{\bs{\lam}(f)}(q^{d(f)})
\]
where 
\[
a_{\lam}(q)=
q^{|\lam|+2n(\lam)-s(\lam)}
\prod_{i \geq 1}\psi_{m_i(\lam)}(q)
\]
and 
\[
s(\lam)=\sum_{i \geq 1}\frac{1}{2}m_i(\lam)
(m_i(\lam)+1)
\]
for a partition $\lam$ (\cite[IV (2.7)]{M}). 
Here, $m_i(\lam)$ is the multiplicity of $i$ in $\lam$, 
that is, $\lam=(1^{m_1}2^{m_2}\cdots)$. 
Let $K_{\bs{\lam}}$ be the 
conjugacy class corresponding to $\bs{\lam}$. Then 
\[
|K_{\bs{\lam}}|=
\frac{|G|}{\prod_{f \in \mcF_q}
a_{\bs{\lam}(f)}(q^{d(f)})}
=
\frac{q^{\frac{1}{2}n(n-1)}\psi_n(q)}
{\prod_{f \in \mcF_q}a_{\bs{\lam}(f)}(q^{d(f)})}.
\]

On the other hand, 
the degree of the irreducible character corresponding 
to $\bs{\lam}$ is 
\[
d_{\bs{\lam}}
=
\psi_n(q) 
\prod_{f \in \mcF_q} (q^{d(f)})^{n(\bs{\lam}(f))}
\tilde{H}_{\bs{\lam}(f)}(q^{d(f)})^{-1}
\]
where 
\[
\tilde{H}_{\lam}(q)=\prod_{x \in \lam}(q^{h(x)}-1)
\]
and $h(x)$ is the hook length of $\lam$ at $x$ 
for a partition $\lam$ (\cite[IV (6.7)]{M}). 
Here, we adopt $n(\bs{\lam}(f))$ instead of the 
conjugate partition $n(\bs{\lam}(f)')$. 
This is not essential since we consider the product of 
all character degrees. 
Here, the conjugate partition $\lam'$ of $\lam$ is the 
partition $\lam'=(\lam'_1\lam'_2 \cdots)$ 
where $\lam'_i$ is the number of positive integers $j$ such that $\lam_j \geq i$.
Now, we set 
\[
b_{\lam}(q)=q^{n(\lam)}
\frac{\prod_{i<j}(q^{\lam_i-\lam_j-i+j}-1)}
{\prod_{r=1}^{l(r)} \psi_{\lam_r+l(r)-r}(q)}
\]
where $l(\lam)$ is the length of a partition $\lam$. 
Then by \cite[p.10, Example 1]{M},
\[
b_{\lam}(q)=\frac{q^{n(\lam)}}{\tilde{H}_{\lam}(q)}
\]
and so 
\[
d_{\bs{\lam}}
=\psi_n(q)\prod_{f \in \mcF_q}b_{\bs{\lam}(f)}(q^{d(f)}).
\] 
This presentation of $d_{\bs{\lam}}$ coincides with the 
degree described in \cite[Theorem 14]{G} 
(and \cite[Lemma 2.2]{S}).

First we consider the $q'$-part, 
\[\frac
{|K_{\bs{\lam}}|_{q'}}
{(d_{\bs{\lam}})_{q'}}
=\prod_{f \in \mcF_q}
\frac
{\tilde{H}_{\bs{\lam}(f)}(q^{d(f)})}
{\prod_i\psi_{m_i(\bs{\lam}(f))}(q^{d(f)})}.
\]
For a partition $\lam$, let 
$V=\{x=(i,j) \in \lam~|~ (i,j+1) \not\in \lam\}$. 
Then 
\[
\prod_{i \geq 1}\psi_{m_i(\lam)}(q)
=
\prod_{x\in V}(q^{h(x)}-1) \mid 
\prod_{x \in \lam}(q^{h(x)}-1)=\tilde{H}_{\lam}(q)
\]
and 
\[\frac
{\tilde{H}_{\lam}(q)}
{\prod_i\psi_{m_i(\lam)}(q)}.
\]
is an integer. It follows that 
$\frac
{|K_{\bs{\lam}}|_{q'}}
{(d_{\bs{\lam}})_{q'}}$ is an integer. 

Next, we consider $q$-part. 
Following \cite{S}, we set  
\[\Omega(\bs{\lam})=
v_q
\Big(
\frac{|K_{\bs{\lam}}|}{(d_{\bs{\lam}})}
\Big)
=
v_q(|K_{\bs{\lam}}|)-
v_q(d_{\bs{\lam}}).
\]
Then 
\begin{eqnarray*}
\Omega({\bs{\lam}})
&=&
\begin{pmatrix} n \\ 2 \end{pmatrix}
-
\sum_{f \in \mcF_q} d(f)(|\bs{\lam}(f)|+2(n(\bs{\lam}(f)))-s(\bs{\lam}(f))+n(\bs{\lam}(f)))
\\ &=& 
\begin{pmatrix} n \\ 2 \end{pmatrix}
-
\sum_{f \in \mcF_q} d(f)(|\bs{\lam}(f)|+3(n(\bs{\lam}(f)))-s(\bs{\lam}(f))).
\end{eqnarray*}
To prove Theorem \ref{HCGL}, it suffices to show that 
\[
v_q(h(\GL_n(q)))=
\sum_{\bs{\lam} \in M_n(\mcF_q,\mcP)}
\Omega(\bs{\lam}) \geq 0.
\]
If $(n,q)=(2,2)$ then 
\[
h(\GL_2(2))=
\frac{1 \times 2 \times 3}{1 \times 1 \times 2}
=3
\]
and Conjecture \ref{HC} holds for $\GL_2(2)$. 
Hence Theorem \ref{HCGL} follows from the following theorem. 

\begin{thm}\label{MAIN1}
If $(n,q) \ne (2,2)$, then we have
\[
\sum_{\bs{\lam} \in M_n(\mcF_q,\mcP)}
\Omega(\bs{\lam}) \geq 
\begin{pmatrix}
n \\2
\end{pmatrix}.
\]
\end{thm}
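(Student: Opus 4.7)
The strategy is to sum the formula for $\Omega(\bs{\lam})$ over all $\bs{\lam} \in M_n(\mcF_q, \mcP)$ and bound the result from below, using the bound on $N(n)$ from Section~3 as the key combinatorial input.

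First I would simplify. Since $\sum_{f} d(f)|\bs{\lam}(f)| = n$ for every $\bs{\lam} \in M_n(\mcF_q, \mcP)$, summing the formula for $\Omega(\bs{\lam})$ derived above gives
\[
\sum_{\bs{\lam}} \Omega(\bs{\lam}) = \pi_{q,n}\left(\binom{n}{2} - n\right) - 3A + S,
\]
where $\pi_{q,n} := |M_n(\mcF_q, \mcP)|$ is the number of conjugacy classes of $\GL_n(q)$, $A := \sum_{\bs{\lam},f} d(f)\, n(\bs{\lam}(f))$, and $S := \sum_{\bs{\lam},f} d(f)\, s(\bs{\lam}(f))$. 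Since $s(\mu) \geq 1$ for every nonempty partition, $S$ is a positive buffer (at least $\pi_{q,n}$), so the goal reduces to bounding $A$ from above well enough that $\pi_{q,n}(\binom{n}{2} - n) - 3A + S \geq \binom{n}{2}$.

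Next, I would estimate $A$. Stratifying by the value $\bs{\lam}(f) = \mu$ and swapping the order of summation,
\[
A = \sum_{f \in \mcF_q} d(f) \sum_{k=0}^{\lfloor n/d(f) \rfloor} N(k)\, \pi^{(f)}_{q,\, n - d(f)k},
\]
where $\pi^{(f)}_{q,m}$ counts functions $\mcF_q \setminus \{f\} \to \mcP$ of total norm $m$ (depending only on $d(f)$). Substituting $N(k) \leq \tfrac{1}{6} k(k+1) p(k)$ from Theorem~\ref{t.Nn} converts this into an explicit upper bound in terms of $p(k)$ and the counts $\pi^{(f)}_{q,m}$. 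Both $\pi_{q,n}$ and this upper bound can then be read as coefficients of the generating function $\prod_{e,i \geq 1}(1-x^{ei})^{-|\mcF_{q,e}|}$ for $\sum_m \pi_{q,m} x^m$, and combining with standard lower bounds on $|\mcF_{q,e}|$ (from M\"obius inversion) turns the desired inequality into a uniform combinatorial estimate in $n$ and $q$. For the few boundary cases $n \leq 3$ (where $\binom{n}{2} - n \leq 0$) and the excluded $(n,q) = (2,2)$, the main estimate will fail, and I would fall back on direct enumeration of $M_n(\mcF_q,\mcP)$, which is very small in that range, possibly aided by retaining $S$ to compensate.

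The main obstacle is precisely this uniform coefficient-level comparison: the upper bound on $A$ carries the weight $k(k+1) p(k)$, which corresponds to extracting two derivatives of the generating function in $x$, so one is effectively asking that a second-moment-type statistic of partition sizes be dominated by $(\binom{n}{2} - n)$ times the zeroth moment $\pi_{q,n}$. The slack comes from the abundance of monic irreducibles of each degree over $\bF_q$, and keeping it quantitative enough to cover every $(n,q)\neq(2,2)$---with $(2,2)$ sitting exactly on the boundary---is the delicate heart of the argument.
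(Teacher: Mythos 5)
Your reduction is correct and is in substance the paper's own first step: after noting that $\sum_f d(f)|\bs{\lam}(f)|=n$ and that your $S$ is not merely a buffer but equals $n\pi_{q,n}$ exactly (within each stratum where the sizes $|\bs{\lam}(f)|$ are fixed one has $\sum_{\mu\vdash m}s(\mu)=mp(m)$, Lemma \ref{l.slam}), the claim becomes $3A\le(\pi_{q,n}-1)\binom{n}{2}$. The genuine gap is the step you defer as ``the delicate heart'', and it is worth being precise about why Theorem \ref{t.Nn} alone cannot close it. Stratifying $A$ by $\al=(|\bs{\lam}(f)|)_f$, each of the $q-1$ strata supported on a single linear polynomial with $\al(f)=n$ contributes $3N(n)$ against a per-stratum budget of $\binom{n}{2}p(n)$, and $N(n)\le\frac{1}{6}n(n+1)p(n)$ only gives $3N(n)\le\bigl(\binom{n}{2}+n\bigr)p(n)$ --- a possible overshoot of $np(n)$ for \emph{every} $n$, not just the boundary cases $n\le 3$ you flag. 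All remaining strata do fit their budget, but showing this requires a superadditivity statement ($\sum_i\binom{\mu_i+1}{2}\le\binom{n}{2}$ for $\mu\vdash n$ with at least two parts, $\mu\ne(n-1,1)$; Lemma \ref{l.nchoose2}) that a coefficientwise generating-function bound does not supply. The total deficit $(q-1)np(n)$ must then be absorbed by the $|\mcF_{q,n}|$ strata concentrated on one irreducible of degree $n$, which contribute $0$ to $A$ but $\binom{n}{2}$ each of budget; making this quantitative forces the strictly sharper inequality $N(n)\le\frac{1}{6}n(n-1)\bigl(p(n)+\frac{|\mcF_{q,n}|-1}{q-1}\bigr)$ (Corollary \ref{l.gu}), whose proof needs the additional input $p(n)\le\frac{n-1}{n}2^{n-3}$ (Lemma \ref{l.pn}) so that the number of degree-$n$ irreducibles dominates $p(n)$ even at $q=2$, together with finite checks for $n\le 7$. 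None of these ingredients appears in your outline, so the proposal identifies the right target but does not yet constitute a proof.
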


In fact, we prove more precisely result (Theorem 
\ref{MAIN2}). 
Theorem \ref{MAIN1} is an consequence of Theorem \ref{MAIN2} below. 
Let $\bZ_{\geq 0}$ be the set of non negative integers. 
For a function $\al : \mcF_q \lorarr \bZ_{\geq 0}$, let \[
||\al||=\sum_{f \in \mcF_q} d(f)\al(f)
\]
\[
M_n(\mcF_q)=\{\al: \mcF_q \lorarr \bZ_{\geq 0}~|~
||\al||=n\}.
\]
For $\al \in M_n(\mcF_q)$, we set 
\[
M_{n, \al}(\mcF_q, \mcP)
=
\{\bs{\lam} \in M_n(\mcF_q,\mcP)~|~
|\bs{\lam}(f)|=\al(f) \ (\forall f \in \mcF_q)\}.
\]
Then
\[
M_n(\mcF_q,\mcP)=
\bigcup_{\al \in M_n(\mcF_q)} M_{n, \al}(\mcF_q,\mcP)
\]
and 
\[
M_{n, \al}(\mcF_q, \mcP) \lorarr 
\prod_{f \in \mcF_q} \mcP_{\al(f)}
\]
\[\bs{\lam} \mapsto (\bs{\lam}(f))_{f \in \mcF_q}
\]
is a bijection. Let 
\[
p(\al)=|M_{n, \al}(\mcF_q, \mcP)|=\prod_{f \in \mcF_q}p(\al(f)).
\]
If we fix $f \in \mcF_q$, then for any 
$\nu \in \mcP_{\al(f)}$,
\[
|\{\bs{\lam} \in M_{n,\al}(\mcF_q, \mcP)~|~
\bs{\lam}(f)=\nu\}|=p(\al)/p(\al(f))
\]
and this is independent of $\nu$. 
For a function $w: \mcP \lorarr \bZ_{\geq 0}$, we have 
\begin{eqnarray*}
\sum_{\bs{\lam} \in M_{n,\al}(\mcF_q, \mcP)}
\sum_{f \in \mcF_q} d(f)w(\bs{\lam}(f)) &=& 
\sum_{f \in \mcF_q} 
\sum_{\bs{\lam} \in M_{n,\al}(\mcF_q, \mcP)}
d(f)w(\bs{\lam}(f)) 
\\ &=& 
\sum_{f \in \mcF_q} 
d(f)\frac{p(\al)}{p(\al(f))}
(\sum_{\nu \in \mcP_{\al(f)}}w(\nu)).
\end{eqnarray*}

\begin{lem}\label{l.slam}
\[
\sum_{\lam \vdash n} s(\lam) 
=np(n)
\]
\[\sum_{\lam \vdash n} (|\lam|+3(n(\lam))-s(\lam))
=
3N(n)\]
\end{lem}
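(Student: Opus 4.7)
The second identity follows from the first with essentially no further work: since $|\lam|=n$ for every $\lam\vdash n$, we have $\sum_{\lam\vdash n}|\lam|=np(n)$, while $\sum_{\lam\vdash n}n(\lam)=N(n)$ by definition of $N(n)$. Therefore
\[
\sum_{\lam\vdash n}(|\lam|+3n(\lam)-s(\lam))
=np(n)+3N(n)-\sum_{\lam\vdash n}s(\lam),
\]
and once the first identity is established, the right-hand side collapses to $3N(n)$. So the only substantive work lies in proving $\sum_{\lam\vdash n}s(\lam)=np(n)$.

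For that, my plan is to interchange the order of summation and recognize the resulting expression via a classical recurrence for $p(n)$. Using the elementary identity $\frac{m_i(m_i+1)}{2}=\sum_{k=1}^{m_i(\lam)}k$, I would write
\[
\sum_{\lam\vdash n}s(\lam)
=\sum_{\lam\vdash n}\sum_{i\geq1}\sum_{k=1}^{m_i(\lam)}k
=\sum_{i\geq1}\sum_{k\geq1}k\cdot\bigl|\{\lam\vdash n\mid m_i(\lam)\geq k\}\bigr|.
\]
The number of partitions of $n$ with at least $k$ parts equal to $i$ equals $p(n-ki)$, since removing those $k$ parts gives a bijection onto $\mcP_{n-ki}$. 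Grouping the pairs $(i,k)$ by the product $m=ki$ (so $i$ runs over divisors of $m$) converts the double sum into $\sum_{m\geq1}\sigma_1(m)p(n-m)$, where $\sigma_1(m)=\sum_{d\mid m}d$.

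The final step is to invoke Euler's recurrence
\[
np(n)=\sum_{m=1}^{n}\sigma_1(m)p(n-m),
\]
obtained by logarithmically differentiating the generating function $\sum_{n\geq0}p(n)x^n=\prod_{k\geq1}(1-x^k)^{-1}$ and comparing coefficients of $x^n$. This identifies $\sum_{\lam\vdash n}s(\lam)$ with $np(n)$ and completes the proof.

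There is no real obstacle here; the only care required is in the reindexing $(i,k)\mapsto m=ki$ that turns the double sum into a convolution of $\sigma_1$ against $p$, and in citing the standard generating-function identity for $p(n)$.
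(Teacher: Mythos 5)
Your proof is correct. The reduction of the second identity to the first is exactly right, and the computation $\sum_{\lam\vdash n}s(\lam)=\sum_{i,k\geq1}k\,p(n-ki)=\sum_{m\geq1}\sigma_1(m)p(n-m)=np(n)$ via Euler's recurrence is valid. The paper's own proof is a single sentence: it asserts that $\sum_{\lam\vdash n}s(\lam)$ equals the total sum of parts over all partitions of $n$, which is trivially $np(n)$ since each partition's parts sum to $n$. The mechanism behind that assertion is the symmetry of your own double sum: writing the total sum of parts as $\sum_{\lam}\sum_i i\,m_i(\lam)=\sum_{i,k\geq1}i\,p(n-ki)$ and comparing with $\sum_{\lam}s(\lam)=\sum_{i,k\geq1}k\,p(n-ki)$, the two agree under the swap $(i,k)\mapsto(k,i)$. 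So the two arguments share the same interchange-of-summation skeleton and differ only in the last step: the paper's (implicit) route closes with this purely combinatorial symmetry and needs no external input, whereas yours closes by citing the generating-function identity $np(n)=\sum_m\sigma_1(m)p(n-m)$ --- which is itself usually proved by exactly that symmetry. If you want a fully self-contained finish, observe that $\sum_{i,k}i\,p(n-ik)=\sum_i i\cdot(\text{total multiplicity of the part }i\text{ among all }\lam\vdash n)=np(n)$, and you are done without invoking Euler.
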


\begin{proof}
$\sum_{\lam \vdash n}s(\lam)$ 
equals to the sum of 
parts of all partitions of $n$, hence it equals to $np(n)$.
\end{proof}

For $\al \in M_n(\mcF_q)$, we have 
\begin{eqnarray*}
\sum_{\bs{\lam} \in M_{n,\al}(\mcF_q,\mcP)}
\Omega(\bs{\lam}) &=& 
\sum_{\bs{\lam} \in M_{n,\al}(\mcF_q,\mcP)}
\Big\{
\begin{pmatrix} n \\ 2 \end{pmatrix}
-
\sum_{f \in \mcF_q} d(f)(|\bs{\lam}(f)|+3(n(\bs{\lam}(f)))-s(\bs{\lam}(f)))
\Big\}
\\ &=&
p(\al)\begin{pmatrix} n \\ 2 \end{pmatrix}
-
3\sum_{f \in \mcF_q}(p(\al)/p(\al(f))d(f)N(\al(f))
\\ &=& 
p(\al)
\Big(
\begin{pmatrix} n \\ 2 \end{pmatrix}
-3\sum_{f \in \mcF_q}d(f)\bar{N}(\al(f))
\Big)
\end{eqnarray*}
where $\bar{N}(m)=N(m)/p(m)$.
Let 
\[
M_n^{(1)}(\mcF_q)=
\{\al \in M_n(\mcF_q)~|~
\al(f)=n \ (\exists f \in \mcF_{q,1})\}
\]
\[
M_n^{(n)}(\mcF_q)=
\{\al \in M_n(\mcF_q)~|~
\al(f)=1 \ (\exists f \in \mcF_{q,n})\}.
\]

The following is our main theorem. Theorem \ref{MAIN1} 
follows from Theorem \ref{MAIN2} immediately. 
We prove Theorem \ref{MAIN2} in section 4. 
 
\begin{thm}\label{MAIN2} 
(1) If $\al \not\in M^{(1)}_n(\mcF_q)$ then 
\[
\begin{pmatrix} n \\ 2 \end{pmatrix}
\geq
3\sum_{f \in \mcF_q}d(f)\bar{N}(\al(f)).
\]
In particular, 
\[
\sum_{\bs{\lam} \in M_{n,\al}(\mcF_q,\mcP)}\Omega(\bs{\lam})
\geq 0.
\]
(2) If $(n,q)\ne (2,2)$ then 
\[
\begin{pmatrix} n \\ 2 \end{pmatrix}
\Big(
\sum_{\al \in 
M_n^{(1)}(\mcF_q) \cup M_n^{(n)}(\mcF_q)}
p(\al)-1
\Big)
\geq 
3 \sum_{\al \in 
M_n^{(1)}(\mcF_q) \cup M_n^{(n)}(\mcF_q)}
\sum_{f \in \mcF_q}d(f)\bar{N}(\al(f))p(\al).
\]
In particular,
\[
\sum_{\al \in 
M_n^{(1)}(\mcF_q) \cup M_n^{(n)}(\mcF_q)}
\sum_{\bs{\lam} \in M_{n,\al}(\mcF_q,\mcP)}\Omega(\bs{\lam})
\geq 
\begin{pmatrix} n \\ 2 \end{pmatrix}.
\]
\end{thm}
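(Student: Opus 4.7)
The plan is to compute both sides of the inequalities in Theorem \ref{MAIN2} explicitly in terms of $N(n)$, $p(n)$, and $|\mcF_{q,n}|$, then reduce to numerical inequalities that can be verified using the upper bounds on $N(n)$ from Section 3 and Gauss's formula for $|\mcF_{q,n}|$.

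For Part (1), let $M = \max_f \al(f)$. Since $\sum_f d(f)\al(f) = n$ and $\al(f) = n$ would force $d(f) = 1$, the hypothesis $\al \not\in M_n^{(1)}(\mcF_q)$ gives $M \leq n - 1$. Applying $\bar N(m) \leq m(m+1)/6$ (Theorem \ref{t.Nn}) reduces the target to $\sum_f d(f)\al(f)(\al(f) + 1) \leq n(n - 1)$, equivalently $\sum_f d(f)\al(f)^2 \leq n(n-2)$. When $M \leq n - 2$, the estimate $\sum_f d(f)\al(f)^2 \leq M \sum_f d(f)\al(f) = Mn \leq n(n - 2)$ closes the case. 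When $M = n - 1$ and $n \geq 3$, the constraints force $\al$ to be supported on two polynomials $f_0, f_1$ of degree $1$ with $\al(f_0) = n - 1$ and $\al(f_1) = 1$; since $\bar N(1) = 0$, the left-hand side collapses to $3\bar N(n - 1) \leq (n - 1)n/2 = \binom{n}{2}$, again by Theorem \ref{t.Nn}. The case $n = 2$ is immediate: $M \leq 1$ makes $\bar N(\al(f)) = 0$ for every $f$ in the support of $\al$.

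For Part (2), direct enumeration yields $\sum_\al p(\al) = (q - 1)p(n) + |\mcF_{q,n}|$ and $\sum_\al \sum_f d(f) \bar N(\al(f)) p(\al) = (q - 1)N(n)$, where the contribution from $M_n^{(n)}(\mcF_q)$ vanishes because $\bar N(1) = 0$. The inequality becomes
\[
\binom{n}{2}\bigl((q - 1)p(n) + |\mcF_{q,n}| - 1\bigr) \geq 3(q - 1)N(n),
\]
equivalently $\binom{n}{2}(|\mcF_{q,n}| - 1) \geq (q - 1)\bigl[3N(n) - \binom{n}{2}p(n)\bigr]$. I would bound the right-hand side by an upper bound on $N(n)$ from Section 3 and the left-hand side via Gauss's formula $|\mcF_{q,n}| = (1/n) \sum_{d \mid n} \mu(n/d)q^d$, which gives $n|\mcF_{q,n}| \geq q^n - 2q^{n/2}$ for $n \geq 2$. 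This grows like $q^n/n$ while $p(n)$ grows sub-exponentially, so the comparison is favourable for all sufficiently large $(n, q)$.

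The main obstacle is the small-$(n, q)$ regime. Using just the crude bound $N(n) \leq n(n + 1)p(n)/6$, the reduced inequality $(n - 1)(|\mcF_{q,n}| - 1) \geq 2(q - 1)p(n)$ fails for several small pairs such as $(n, q) = (2, 3), (3, 2), (4, 2)$. To overcome this, Section 3 should supply a sharper bound such as $\bar N(n) \leq (n^2 - 1)/6$ for $n \geq 2$, which reduces the target to $n(|\mcF_{q,n}| - 1) \geq (q - 1)p(n)$. This last inequality becomes tight at $(n, q) = (2, 3)$ and $(n, q) = (3, 2)$ and holds precisely when $(n, q) \neq (2, 2)$, explaining the exclusion in the hypothesis; the exceptional pair is thus identified as the genuine obstruction rather than an artifact of the method.
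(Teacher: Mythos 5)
Part (1) of your argument is correct and is essentially the paper's: both routes apply Theorem \ref{t.Nn} in the form $3\bar N(m)\le\binom{m+1}{2}$ and then verify an elementary combinatorial inequality ($\sum_f d(f)\al(f)^2\le n(n-2)$ in your version, Lemma \ref{l.nchoose2} plus Corollary \ref{c.dNmi} in the paper's), treating the exceptional shape $(n-1,1)$ separately via $\bar N(1)=0$.

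Part (2) has a genuine gap. Your reduction of the displayed inequality to
\[
\binom{n}{2}\bigl((q-1)p(n)+|\mcF_{q,n}|-1\bigr)\ \ge\ 3(q-1)N(n)
\]
is correct and agrees with the paper, and you correctly diagnose that Theorem \ref{t.Nn} alone is too weak for small $(n,q)$. But the step you then lean on --- that ``Section 3 should supply'' the bound $\bar N(n)\le(n^{2}-1)/6$ --- is not something the paper proves, and you do not prove it either. This $q$-independent bound sits strictly between Theorem \ref{t.Nn} and the strengthenings that the authors explicitly state (in the Remark closing Section 3) they were unable to establish; for $n\ge 3$ it is a consequence of the unproven $N(n)\le\frac16 n(n-1)(p(n)+1)$. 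Moreover it does not follow by tightening the paper's decomposition piecewise: for $n=3$ one has $N_{21}(3)+N_{12}(3)=3>\frac13(n^2-1)p_{12}(3)=\frac83$, so the estimates of Section 3 applied to each $\mcP_{ij}(n)$ cannot yield it. The paper instead closes the argument with the $q$-\emph{dependent} bound of Corollary \ref{l.gu}(1), namely $N(n)\le\frac16 n(n-1)\bigl(p(n)+\frac{|\mcF_{q,n}|-1}{q-1}\bigr)$, whose proof requires the counting estimates of Lemmas \ref{l.phin} and \ref{l.pn} for $n\ge8$ together with an explicit table for $2\le n\le7$; the slack term $\frac{|\mcF_{q,n}|-1}{q-1}$ is exactly what absorbs the deficit you observed. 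Your closing asymptotic remark (``favourable for all sufficiently large $(n,q)$'') is not effective, and your final inequality $n(|\mcF_{q,n}|-1)\ge(q-1)p(n)$ is likewise asserted rather than verified. So the architecture of part (2) is right, but the analytic input that makes it work is missing.
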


%
%

\section{Upper bounds of $N(n)$}

Recall that $N(n)=\sum_{\lam \vdash n} n(\lam)$ and 
$n(\lam)=\sum_{i=1}^{l(\lam)}(i-1)\lam_i$ for a 
partiotion of $n$ where $l(\lam)$ is the length of $\lam$. 
In this section, we consider various upper bounds 
of $N(n)$. 
We set 
\begin{eqnarray*}
\mcP_{11}(n)
&=& \{\lam \vdash n ~|~  
\lam_{l(\lam)}=\lam'_{l(\lam')}=1\}
=
\{\lam \vdash n ~|~ \lam_{l(\lam)}=1, \ \lam_1>\lam_2\}
\\
\mcP_{21}(n)&=&\{\lam \vdash n ~|~  
\lam_{l(\lam)} \geq 2, \ \lam'_{l(\lam')}=1\}
=
\{\lam \vdash n ~|~  
\lam_{l(\lam)} \geq 2, \ \lam_1>\lam_2\}
\\
\mcP_{12}(n)&=&\{\lam \vdash n ~|~  
\lam_{l(\lam)}=1, \ \lam'_{l(\lam')}\geq 2\}
=
\{\lam \vdash n ~|~ 
\lam_{l(\lam)}=1, \ \lam_1=\lam_2\}
\\
\mcP_{22}(n)&=&\{\lam \vdash n ~|~  
\lam_{l(\lam)}, \lam'_{l(\lam')}\geq 2 \} 
=
\{\lam \vdash n ~|~ 
\lam_{l(\lam)}\geq 2, \ \lam_1=\lam_2\}
\end{eqnarray*}
and 
\[
p_{ij}(n)=|\mcP_{ij}(n)|, \ 
N_{ij}(n)=\sum_{\lam \in \mcP_{ij}(n)} n(\lam).
\]
Then 
\[\mcP_n=\bigcup_{i,j} \mcP_{ij}(n)\]
\[p(n)=\sum_{i,j}p_{ij}(n), \ 
N(n)=\sum_{i,j}N_{ij}(n).\]
Moreover, 
\[
\mcP_{ij}(n) \lorarr \mcP_{ji}(n)
\]
\[\lam \mapsto \lam'
\]
is a bijection and $p_{ij}(n)=p_{ji}(n)$. 

For $n \geq 3$, we define three bijections $\psi, \psi_{21}, \psi_{12}$ as follows.
\[
\psi: \mcP_{n-2} \lorarr \mcP_{11}(n)
\]
\[
\psi(\lam)=(\lam_1+1, \lam_2, \dots, \lam_{l(\lam)},1)\] 
\[
n(\psi(\lam))=n(\lam)+l(\lam)
\]

\[
\psi_{21}: \mcP_{21}(n-1) \cup \mcP_{22}(n-1) 
\lorarr 
\mcP_{21}(n)
\]
\[\psi_{21}(\lam)=
(\lam_1+1, \lam_2, \dots, \lam_{l(\lam)})
\]
\[
n(\psi_{21}(\lam))=n(\lam)
\]

\[
\psi_{12}: \mcP_{12}(n-1) \cup \mcP_{22}(n-1) 
\lorarr 
\mcP_{12}(n)
\]
\[\psi_{12}(\lam)=
(\lam_1, \lam_2, \dots, \lam_{l(\lam)},1)
\]
\[
n(\psi_{12}(\lam))=n(\lam)+l(\lam)
\]

\begin{lem}\label{l.N22}
(1) If $\lam \in \mcP_{22}(n)$ then 
$n(\lam)+n(\lam') \leq \frac{1}{4}n(n+1)$.
\\
(2) 
\[N_{22}(n) \leq \frac{1}{8}n(n+1)p_{22}(n)\]
\end{lem}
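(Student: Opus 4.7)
My plan is to reduce part~(1) to a clean bound on $\lam_1 + l(\lam)$. The starting point is the elementary identity obtained by expanding $n(\lam') = \sum_i \binom{\lam_i}{2}$:
\[
\sum_i \lam_i^2 = 2\, n(\lam') + n, \qquad \sum_j (\lam'_j)^2 = 2\, n(\lam) + n,
\]
so that, adding and dividing by two,
\[
n(\lam) + n(\lam') = \tfrac{1}{2}\Bigl(\sum_i \lam_i^2 + \sum_j (\lam'_j)^2\Bigr) - n.
\]
Using $\lam_i \leq \lam_1$ I would bound $\sum_i \lam_i^2 \leq \lam_1 \sum_i \lam_i = \lam_1 n$, and dually $\sum_j (\lam'_j)^2 \leq l(\lam)\, n$, which gives
\[
n(\lam) + n(\lam') \leq \tfrac{(\lam_1 + l(\lam) - 2)\, n}{2}.
\]

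The next step is to exploit the conditions defining $\mcP_{22}(n)$. Because $\lam_{l(\lam)} \geq 2$, every part of $\lam$ is at least $2$, and because $\lam_1 = \lam_2$ the top two rows are equal. Summing parts yields
\[
n = \lam_1 + \lam_2 + \sum_{i=3}^{l(\lam)} \lam_i \geq 2\lam_1 + 2(l(\lam) - 2),
\]
so $\lam_1 + l(\lam) \leq (n+4)/2$. Plugging this into the previous inequality produces the even stronger estimate $n(\lam) + n(\lam') \leq n^2/4$, and since $n^2/4 \leq n(n+1)/4$, this proves part~(1).

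For part~(2), I would use the fact (already noted in the excerpt) that conjugation $\lam \mapsto \lam'$ is a bijection of $\mcP_{22}(n)$ with itself. Hence pairing each $\lam$ with its conjugate,
\[
2 N_{22}(n) = \sum_{\lam \in \mcP_{22}(n)} \bigl(n(\lam) + n(\lam')\bigr) \leq p_{22}(n) \cdot \tfrac{n(n+1)}{4},
\]
and dividing by two gives $N_{22}(n) \leq n(n+1)\, p_{22}(n)/8$.

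I don't expect a real obstacle here: the proof is a short chain of inequalities. The only point requiring a little thought is spotting the right intermediate quantity $\lam_1 + l(\lam)$ and translating both defining conditions of $\mcP_{22}(n)$ into the single clean inequality $\lam_1 + l(\lam) \leq (n+4)/2$; once this is in hand, the rest is routine and the symmetrization step for~(2) is immediate from the existing involution $\lam \mapsto \lam'$.
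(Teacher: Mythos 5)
Your argument is correct, and for part (1) it takes a genuinely different route from the paper. The paper proves (1) by induction on $l(\lam)$: it strips off the last part $\lam_{l}=k$, observes that $n(\lam)$ and $n(\lam')$ grow by $k(l-1)$ and $\tfrac{1}{2}k(k-1)$ respectively, and closes with a quadratic inequality in $n$ and $k$. You instead use the identity $n(\lam')=\sum_i\binom{\lam_i}{2}$ (and its conjugate) to rewrite $n(\lam)+n(\lam')$ exactly as $\tfrac{1}{2}\bigl(\sum_i\lam_i^2+\sum_j(\lam'_j)^2\bigr)-n$, bound the two sums by $\lam_1 n$ and $l(\lam)n$, and then convert the two defining conditions of $\mcP_{22}(n)$ (all parts $\geq 2$, $\lam_1=\lam_2$) into the single linear inequality $\lam_1+l(\lam)\leq(n+4)/2$. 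All steps check out (the $i\geq 3$ sum is empty when $l=2$, which is fine), and you in fact obtain the sharper bound $n(\lam)+n(\lam')\leq n^2/4$, which is tight at $\lam=(k,k)$ --- exactly the paper's base case --- so your estimate cannot be improved on $\mcP_{22}(n)$. Your approach trades the induction for a closed-form identity and isolates the right intermediate quantity $\lam_1+l(\lam)$; the paper's induction is more self-contained but ends with a less transparent polynomial verification. Part (2), the symmetrization over the involution $\lam\mapsto\lam'$ of $\mcP_{22}(n)$, is identical to the paper's.
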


\begin{proof}
(1) Let $\lam=(\lam_1, \dots, \lam_l)$, $l=l(\lam) \geq 2$, 
$\lam_l=k \geq 2$. We proceed by induction on $l$.
If $l=2$, then $\lam=(k,k)$, $k \geq 2$, $2k=n$ and 
it follows that 
\[
n(\lam)+n(\lam')=k+k(k-1)=k^2=\frac{n^2}{4} \leq \frac{1}{4}n(n+1).
\]
Next, suppose that $l \geq 3$. 
Let $\mu=(\lam_1, \dots, \lam_{l-1})$. Then 
\[
n(\lam)=n(\mu)+k(l-1)
\]
\[
n(\lam')=n(\mu')+(1+2+\cdots+(k-1))
=n(\mu')+\frac{1}{2}k(k-1).
\]
Since $\lam_1=\lam_2$, $\lam_{l-1} \geq \lam_l \geq 2$, 
we have $\mu \in \mcP_{22}(n-k)$. Then by induction 
\begin{eqnarray*}
n(\lam)+n(\lam') &=& 
n(\mu)+n(\mu')+k(l-1)+\frac{1}{2}k(k-1) \\
& \leq &
\frac{1}{4}(n-k)(n-k+1)+(n-k)+\frac{1}{2}k(k-1) 
\\ 
&=& 
\frac{1}{4}
(n^2-(2k-5)n+(3k^2-7k)) 
\\ & \leq & 
\frac{1}{4}n(n+1). 
\end{eqnarray*}
(2) By (1), 
\begin{eqnarray*}
2N_{22}(n)&=&2 \sum _{\lam \in \mcP_{22}(n)}n(\lam) \\ 
&=&
\sum _{\lam \in \mcP_{22}(n)} (n(\lam)+n(\lam')) \\ 
& \leq &
\frac{1}{4}n(n+1)p_{22}(n). 
\end{eqnarray*}
\end{proof}

\begin{lem}\label{l.llam}
\[
\sum_{\lam \in \mcP_n} l(\lam) \leq 
\frac{1}{2}(n+1)p(n)
\]
\end{lem}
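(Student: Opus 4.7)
The plan is to exploit the conjugation involution $\lambda \mapsto \lambda'$ on $\mcP_n$ together with the elementary observation that $l(\lambda') = \lambda_1$. This lets us symmetrize the sum we are trying to bound.

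First I would establish the pointwise inequality $l(\lambda) + \lambda_1 \leq n+1$ for every $\lambda \vdash n$. This is immediate: if $\lambda$ has $l(\lambda)$ parts and largest part $\lambda_1$, then the total weight satisfies $n = |\lambda| \geq \lambda_1 + (l(\lambda)-1) \cdot 1$, since the remaining $l(\lambda) - 1$ parts are each at least $1$.

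Next, using that conjugation is a bijection $\mcP_n \to \mcP_n$ and that $l(\lambda') = \lambda_1$, I would write
\begin{eqnarray*}
2\sum_{\lam \in \mcP_n} l(\lam)
&=& \sum_{\lam \in \mcP_n} l(\lam) + \sum_{\lam \in \mcP_n} l(\lam') \\
&=& \sum_{\lam \in \mcP_n} \bigl(l(\lam) + \lam_1\bigr) \\
&\leq& (n+1)\, p(n),
\end{eqnarray*}
and dividing by $2$ yields the desired bound.

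There is no real obstacle here; the only point that requires any thought is recognizing that the natural way to bound the total length is to pair it with the total largest-part via conjugation, after which the pointwise bound $l(\lambda)+\lambda_1 \leq n+1$ finishes the argument. This parallels the symmetrization trick already used in Lemma \ref{l.N22}(2), where $N_{22}(n)$ was bounded by pairing $n(\lambda)$ with $n(\lambda')$.
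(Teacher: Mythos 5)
Your proof is correct and is essentially identical to the paper's own argument: both use the conjugation bijection together with the pointwise bound $l(\lam)+\lam_1 \leq n+1$ to symmetrize the sum and divide by $2$.
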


\begin{proof} 
If $\lam \vdash n$, then 
$l(\lam)+l(\lam')=l(\lam)+\lam_1 \leq n+1$ and hence 
\[2 \sum_{\lam \in \mcP_n}l(\lam)
=
\sum_{\lam \vdash n}(l(\lam)+l(\lam'))
\leq 
p(n)(n+1).
\]
\end{proof}

\begin{lem}\label{l.N11} 
For $n \geq 2$, we have the following.
\\
(1)
$N_{11}(n) \leq N(n-2)+\frac{1}{2}(n-1)p(n-2)$. 
\\
(2) If $N(n-2) \leq \frac{1}{6}(n-2)(n-1)p(n-2)$, then 
\[N_{11}(n) \leq \frac{1}{6}n(n+1)p_{11}(n).\]
\end{lem}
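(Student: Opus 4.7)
The plan is to exploit the bijection $\psi:\mcP_{n-2} \lorarr \mcP_{11}(n)$ that was set up just before the lemma. Since $n(\psi(\lam))=n(\lam)+l(\lam)$, summing over $\mcP_{n-2}$ gives
\[
N_{11}(n) \;=\; \sum_{\lam \in \mcP_{n-2}}\bigl(n(\lam)+l(\lam)\bigr) \;=\; N(n-2) \;+\; \sum_{\lam \in \mcP_{n-2}} l(\lam).
\]
Part (1) then follows immediately by bounding the second sum using Lemma \ref{l.llam} applied to $n-2$, which yields $\sum_{\lam \in \mcP_{n-2}} l(\lam) \leq \tfrac{1}{2}(n-1)p(n-2)$.

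For part (2), I would simply substitute the hypothesis $N(n-2) \leq \tfrac{1}{6}(n-2)(n-1)p(n-2)$ into the bound from (1) and combine the two terms over a common denominator:
\[
N_{11}(n) \;\leq\; \frac{(n-2)(n-1)}{6}p(n-2) + \frac{n-1}{2}p(n-2)
 \;=\; \frac{(n-1)(n+1)}{6}p(n-2).
\]
Since $\psi$ is a bijection, $p_{11}(n)=p(n-2)$, and $(n-1)(n+1) \leq n(n+1)$, so this is $\leq \tfrac{1}{6}n(n+1)p_{11}(n)$, giving the claim.

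The only thing to check is the edge case $n=2$, where $\mcP_{11}(2)$ is empty while $\mcP_0=\{\emptyset\}$, so the bijection is vacuous; but both sides of (1) and (2) remain valid (the left side is $0$ and the right side is nonnegative), so nothing goes wrong. There is no real obstacle here — the content is entirely packaged in the bijection $\psi$ and in Lemma \ref{l.llam}; once those are in hand the rest is a one-line algebraic manipulation.
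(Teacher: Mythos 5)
Your proof is correct and follows exactly the same route as the paper: the bijection $\psi:\mcP_{n-2}\to\mcP_{11}(n)$ with $n(\psi(\lam))=n(\lam)+l(\lam)$, Lemma \ref{l.llam} applied to $n-2$, and the identity $\tfrac{1}{6}(n-2)(n-1)+\tfrac{1}{2}(n-1)=\tfrac{1}{6}(n-1)(n+1)\leq \tfrac{1}{6}n(n+1)$, with the $n=2$ case checked separately since $N_{11}(2)=0$. Nothing is missing.
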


\begin{proof}
(1) If $n=2$ then $N_{11}(n)=0$ and the result follows.
Assume that $n \geq 3$. By the bijection 
$\psi: \mcP_{n-2} \lorarr \mcP_{11}(n)$, 
\begin{eqnarray*}
N_{11}(n)
&=& 
\sum_{\lam \vdash n-2}n(\psi(\lam)) 
\\ &=& 
\sum_{\lam \vdash n-2}(n(\lam)+l(\lam))
\\ &=& 
N(n-2)+\sum_{\lam \vdash n-2}l(\lam)
\\ & \leq & 
N(n-2)+\frac{1}{2}(n-1)p(n-2) 
\end{eqnarray*}
(2) If $n=2$ then $N_{11}(n)=0$ and the result follows. 
Assume that $n \geq 3$. Since $p(n-2)=p_{11}(n)$, 
it follows that 
\begin{eqnarray*}
N_{11}(n) & \leq & 
N(n-2)+\frac{1}{2}(n-1)p(n-2) 
\\ & \leq & 
\Big( \frac{1}{6}(n-2)(n-1)+\frac{1}{2}(n-1)\Big)p_{11}(n)
\\ &=& 
\frac{1}{6}(n-1)(n+1)p_{11}(n)
\\ & \leq & 
\frac{1}{6}n(n+1)p_{11}(n).
\end{eqnarray*}
\end{proof}

\begin{lem}\label{l.llam12}
For $n \geq 4$, 
\[
\sum_{\lam \in \mcP_{12}(n) \cup \mcP_{22}(n)} l(\lam) 
\leq 
\frac{2}{3}(n+1)
(p_{12}(n)+p_{22}(n)).
\]
\end{lem}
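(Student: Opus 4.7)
The plan is to induct on $n$, with base cases $n=4,5,6,7$ verified directly. Set $L(n)=\sum_{\lam\in\mcP_{12}(n)\cup\mcP_{22}(n)}l(\lam)$ and $P(n)=p_{12}(n)+p_{22}(n)$. I would split $L(n)$ into contributions from $\mcP_{12}(n)$ and $\mcP_{22}(n)$, handling each by a different technique.

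For the $\mcP_{12}(n)$ piece I would apply the bijection $\psi_{12}:\mcP_{12}(n-1)\cup\mcP_{22}(n-1)\to\mcP_{12}(n)$ from earlier in this section. Because $\psi_{12}$ appends a part of size $1$, it increases $l$ by exactly one, giving $\sum_{\lam\in\mcP_{12}(n)}l(\lam)=L(n-1)+P(n-1)$ and also $p_{12}(n)=P(n-1)$. The induction hypothesis $L(n-1)\leq\frac{2n}{3}P(n-1)$ then bounds this piece by $\frac{2n+3}{3}\,p_{12}(n)$.

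For the $\mcP_{22}(n)$ piece I would exploit self-conjugacy: both conditions $\lam_1=\lam_2$ and $\lam_{l(\lam)}\geq 2$ are invariant under $\lam\mapsto\lam'$, and $l(\lam')=\lam_1$, so pairing $\lam$ with $\lam'$ gives $2\sum_{\mcP_{22}(n)}l(\lam)=\sum_{\mcP_{22}(n)}(l(\lam)+\lam_1)$. Since every part of $\lam\in\mcP_{22}(n)$ is $\geq 2$ and the first two coincide, $n\geq 2\lam_1+2(l(\lam)-2)$, hence $l(\lam)+\lam_1\leq n/2+2$. Summing yields $\sum_{\mcP_{22}(n)}l(\lam)\leq\frac{n+4}{4}\,p_{22}(n)$.

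Combining the two pieces, the inductive step reduces to the algebraic inequality $4\,p_{12}(n)\leq(5n-4)\,p_{22}(n)$, which is the main obstacle. It fails exactly when $p_{22}(n)=0$, which in the range $n\geq 4$ occurs only at $n=5$ and $n=7$; these cases, together with $n=4$ and $n=6$, would be treated as base cases by direct enumeration (the target bound is tight at $n=5$, which is precisely what forces the constant $\frac{2}{3}$). For $n\geq 8$ one has $p_{22}(n)\geq 1$, and the partition-count inequality should then be established combinatorially, for instance by combining the recursion $p_{12}(n)=p_{12}(n-1)+p_{22}(n-1)$ inherited from $\psi_{12}$ with the injection $\mcP_{22}(n-2)\hookrightarrow\mcP_{22}(n)$ obtained by appending a pair $(2,2)$, which provides $p_{22}(n)\geq p_{22}(n-2)$ and lets one control the ratio $p_{12}(n)/p_{22}(n)$ by comparison against the much larger $(5n-4)/4$.
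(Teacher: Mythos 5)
Your decomposition is sound as far as it goes: the bound $\sum_{\lam\in\mcP_{12}(n)}l(\lam)\leq\frac{2n+3}{3}p_{12}(n)$ via $\psi_{12}$ and induction is correct, and so is the conjugation argument giving $\sum_{\lam\in\mcP_{22}(n)}l(\lam)\leq\frac{n+4}{4}p_{22}(n)$ (indeed $n\geq 2\lam_1+2(l-2)$ for $\lam\in\mcP_{22}(n)$, and $\mcP_{22}(n)$ is stable under conjugation with $l(\lam')=\lam_1$). The algebra reducing the inductive step to $4\,p_{12}(n)\leq(5n-4)\,p_{22}(n)$ also checks out. The genuine gap is that this last inequality is itself a nontrivial partition-counting statement, and the tools you propose do not suffice to prove it. Iterating $p_{12}(n)=p_{12}(n-1)+p_{22}(n-1)$ gives $p_{12}(n)=1+\sum_{k=4}^{n-1}p_{22}(k)$, a sum of roughly $n$ terms; the injection $\mcP_{22}(n-2)\hookrightarrow\mcP_{22}(n)$ only controls the terms $p_{22}(k)$ with $k\equiv n\pmod 2$ by $p_{22}(n)$, and says nothing about the opposite-parity terms. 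Those are the problem: $p_{22}$ is badly non-monotone across parities (e.g.\ $p_{22}(8)=3$ while $p_{22}(9)=1$, and $p_{22}(6)=2$ while $p_{22}(7)=0$), so bounding $\sum_{k\not\equiv n}p_{22}(k)$ by a multiple of $p_{22}(n)$ needs a separate, genuinely new estimate; at $n=9$ the target $4\cdot 7=28\leq 41\cdot 1$ holds with little room, so there is no slack to absorb a crude bound. Until $4\,p_{12}(n)\leq(5n-4)\,p_{22}(n)$ is actually established for all $n\geq 8$, the proof is incomplete.

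For comparison, the paper sidesteps any such counting inequality: it works with $\mcQ=\mcP_{12}(n)\cup\mcP_{22}(n)$ as a whole, splits it into the partitions with $l(\lam)>\frac{2}{3}(n+1)$ and the rest, and constructs an explicit injection $\vphi$ from the first set into the second with $l(\lam)+l(\vphi(\lam))\leq\frac{4}{3}(n+1)$; averaging then gives the bound directly. If you want to salvage your route, the missing ingredient is a proof of $4\,p_{12}(n)\leq(5n-4)\,p_{22}(n)$ for $n\geq 8$ (which does appear to be true numerically), and that will require some device for comparing $p_{22}(n-1)$ with $p_{22}(n)$ or for bounding $\sum_{k<n}p_{22}(k)$ more cleverly than termwise monotonicity.
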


\begin{proof}
We set 
\begin{eqnarray*}
\mcQ&=&\mcP_{12}(n) \cup \mcP_{22}(n) 
\\
\mcQ_1 &=&\{\lam \in \mcQ~|~ 
l(\lam) >\frac{2}{3}(n+1)\}
\\
\mcQ_2&=&\{\lam \in \mcQ~|~ 
l(\lam) \leq \frac{2}{3}(n+1)\}
=\mcQ- \mcQ_1.
\end{eqnarray*}
We define a map $\vphi: \mcQ_1 \lorarr \mcQ_2$ as follows:
for $\lam \in \mcQ_1$, $\lam_1=\lam_2$, $l=l(\lam)$, 
\[
\vphi(\lam)=
\begin{cases}
(\frac{l}{2}, \frac{l}{2}, \lam_1-1, \lam_2-1, \dots, \lam_l-1)
& \mbox{($l$ : even)}\\
(\frac{l-1}{2}, \frac{l-1}{2}, \lam_1, \lam_2-1, \dots, 
\lam_l-1)
& \mbox{($l$ : odd)}.
\end{cases}
\]
We claim that 
$\vphi(\lam) \vdash n$ and $\vphi(\lam) \in \mcQ_2$.  
Assume that $l$ is even. Then 
$\lam_1-1 \leq n-l <\frac{l}{2}$ 
since $l>\frac{2}{3}(n+1)$. Hence $\vphi(\lam) \vdash n$. 
Moreover, 
\begin{eqnarray*}
l(\vphi(\lam)) &=& l-m_1(\lam)+2 
\\ & \leq & n-l+2
\\ &<& 
n-\frac{2}{3}(n+1)+2 
\\ & \leq & 
\frac{2}{3}(n+1)
\end{eqnarray*}
and $\vphi(\lam) \in \mcQ_2$. 
Next assume that $l$ is odd. 
Since $\frac{2}{3}(n+1)<l$, we have $2n+3 \leq 3l$. Hence  
\[\lam_1 \leq n-l+1 \leq \frac{l-1}{2}\]
and $\vphi(\lam) \vdash n$. 
If $\lam_1 \geq 2$, then as in the case that $l$ is even, 
\[
l(\vphi(\lam))=l-m_1(\lam)+2
\leq n-l+2 \leq \frac{2}{3}(n+1).
\]
On the other hand, if $\lam_1 =1$,  
\[\lam=(1^n), \ l=n\]
\[\vphi(\lam)=
(\frac{l-1}{2}, \frac{l-1}{2},1), \ l(\vphi(\lam))=3\]
\[3 <\frac{10}{3}=\frac{2}{3}(4+1) \leq \frac{2}{3}(n+1).\]
In either cases, we have that $\vphi(\lam) \in \mcQ_2$. 

Next we claim that $\vphi$ is injective. 
Let $\lam,\mu \in \mcQ_1$ and assume that 
$\vphi(\lam)=\vphi(\mu)$. 
If $l(\lam)$ is even and $l(\mu)$ is odd, then  
\[
\vphi(\lam)=
(\ast,\ast,\lam_1-1,\lam_2-1, \dots)
\]
\[
\vphi(\mu)=
(\ast,\ast,\mu_1,\mu_2-1, \dots)
\]
and we have a contradiction since $\lam_1=\lam_2$ and 
$\mu_1=\mu_2$. 
Hence we have that $l(\lam) \equiv l(\mu) \bmod 2$.
If $l(\lam) \equiv l(\mu) \equiv 0 \bmod 2$, then  
\[
\vphi(\lam)=
(\frac{l(\lam)}{2}, \frac{l(\lam)}{2}, \lam_1-1,\lam_2-1, 
\dots, \lam_{l(\lam)}-1)
\]
\[
\vphi(\mu)=
(\frac{l(\mu)}{2}, \frac{l(\mu)}{2},\mu_1-1,\mu_2-1, 
\dots, \mu_{l(\mu)}-1)
\]
and it follows that $l(\lam)=l(\mu)$, $\lam=\mu$.
Similarly, if 
$l(\lam) \equiv l(\mu) \equiv 1 \bmod 2$ then 
\[
\vphi(\lam)=
(\frac{l(\lam)-1}{2}, \frac{l(\lam)-1}{2}, \lam_1,\lam_2-1, 
\dots, \lam_{l(\lam)}-1)
\]
\[
\vphi(\mu)=
(\frac{l(\mu)-1}{2}, \frac{l(\mu)-1}{2},\mu_1,\mu_2-1, 
\dots, \mu_{l(\mu)}-1)
\]
and it follows that $l(\lam)=l(\mu)$, $\lam=\mu$.

Next we show that if $\lam \in \mcQ_1$ then  
$l(\lam)+l(\vphi(\lam)) \leq \frac{4}{3}(n+1)$. 
Let $l=l(\lam)$. 
If $l$ is even or $\lam_1 \geq 2$, then 
\[l(\vphi(\lam)) \leq n-l+2\]
and 
\[l+l(\vphi(\lam)) \leq n+2 \leq \frac{4}{3}(n+1).\]
On the other hand, if $l$ is odd and $\lam_1=1$, namely, 
$l=n$, $\lam=(1^n)$, then 
 \[l+l(\vphi(\lam))
=n+3 \leq \frac{4}{3}(n+1)\]
since $n \geq 5$, and $l(\vphi(\lam))=3$. 

Finaly, we prove the statement of the Lemma. 
Let $\mcQ_3=\mcQ_2-\vphi(\mcQ_1)$. Then 
\begin{eqnarray*}
\sum_{\lam \in \mcQ}l(\lam) 
&=& 
\sum_{\lam \in \mcQ_1}
(l(\lam)+l(\vphi(\lam)) + 
\sum_{\lam \in \mcQ_3}l(\lam)
\\ & \leq & 
\frac{4}{3}(n+1)|\mcQ_1|+\frac{2}{3}(n+1)|\mcQ_3|
\\ &=& 
\frac{2}{3}(n+1)|\mcQ|
\\ &=& 
\frac{2}{3}(n+1)
(p_{12}(n)+p_{22}(n)).
\end{eqnarray*}
This completes the proof of Lemma. 
\end{proof}

\begin{lem}\label{l.N12}
(1) For $n \geq 5$, 
\[
N_{21}(n)+N_{12}(n) \leq 
N_{21}(n-1)+N_{12}(n-1)+2N_{22}(n-1)+
\frac{2}{3}np_{12}(n).
\]
(2) For $n \geq 1$, 
\[
N_{21}(n)+N_{12}(n) \leq 
\frac{n}{3}(n+1)p_{12}(n).\]
\end{lem}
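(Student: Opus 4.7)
The plan is to prove (1) by directly exploiting the bijections $\psi_{21}$ and $\psi_{12}$ together with Lemma~\ref{l.llam12}, and then to derive (2) by induction on $n$ using (1), Lemma~\ref{l.N22}(2), and the identity $p_{12}(n)=p_{12}(n-1)+p_{22}(n-1)$ coming from $\psi_{12}$.

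For (1), I would split $N_{21}(n)+N_{12}(n)$ by pulling each piece back through the two bijections. Since $n(\psi_{21}(\lam))=n(\lam)$, summing over the image gives
\[
N_{21}(n)=N_{21}(n-1)+N_{22}(n-1),
\]
and since $n(\psi_{12}(\lam))=n(\lam)+l(\lam)$,
\[
N_{12}(n)=N_{12}(n-1)+N_{22}(n-1)+\sum_{\lam \in \mcP_{12}(n-1)\cup\mcP_{22}(n-1)}l(\lam).
\]
Adding these, applying Lemma~\ref{l.llam12} to the sum of lengths (which requires $n-1\geq 4$, hence $n\geq 5$), and using $p_{12}(n-1)+p_{22}(n-1)=p_{12}(n)$ to absorb the length sum into $\tfrac{2}{3}np_{12}(n)$ produces the inequality.

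For (2), I would induct on $n$. The cases $1\leq n\leq 4$ are verified by direct enumeration: $\mcP_{21}(n)\cup\mcP_{12}(n)$ contains only a handful of partitions and the bound holds with room to spare. For the induction step with $n\geq 5$, apply (1), then substitute the inductive hypothesis $N_{21}(n-1)+N_{12}(n-1)\leq \tfrac{(n-1)n}{3}p_{12}(n-1)$ together with Lemma~\ref{l.N22}(2) rewritten as $2N_{22}(n-1)\leq \tfrac{(n-1)n}{4}p_{22}(n-1)$. Writing $\tfrac{2n}{3}p_{12}(n)=\tfrac{2n}{3}(p_{12}(n-1)+p_{22}(n-1))$ on the right-hand side, the total bound becomes a linear combination of $p_{12}(n-1)$ and $p_{22}(n-1)$ whose coefficients compute to $\tfrac{n(n+1)}{3}$ and $\tfrac{n(3n+5)}{12}$, respectively. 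Since $\tfrac{n(3n+5)}{12}\leq \tfrac{n(n+1)}{3}$ is equivalent to $n\geq 1$, both coefficients are $\leq \tfrac{n(n+1)}{3}$ and the bound collapses to $\tfrac{n(n+1)}{3}p_{12}(n)$.

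The main obstacle is essentially bookkeeping: the constants $\tfrac{1}{8}$ in Lemma~\ref{l.N22}(2) and $\tfrac{2}{3}$ in Lemma~\ref{l.llam12} are calibrated precisely so that the recursion from (1) just closes in (2), with the coefficient of $p_{22}(n-1)$ being the tight one. No new combinatorial input beyond the lemmas already established in this section is required.
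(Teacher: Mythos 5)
Your proposal is correct and follows essentially the same route as the paper: part (1) via the bijections $\psi_{21},\psi_{12}$ and Lemma~\ref{l.llam12} applied at $n-1$, and part (2) by induction using Lemma~\ref{l.N22}(2), the identity $p_{12}(n)=p_{12}(n-1)+p_{22}(n-1)$, and direct checking for $n\leq 4$. The only cosmetic difference is in the bookkeeping for (2): you keep the constant $\tfrac14$ from Lemma~\ref{l.N22}(2) and verify $\tfrac{n(3n+5)}{12}\leq\tfrac{n(n+1)}{3}$ at the end, while the paper weakens it to $\tfrac13$ upfront so the two coefficients match immediately.
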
 

\begin{proof} 
(1) 
\begin{eqnarray*}
N_{21}(n)+N_{12}(n) 
&=& 
\sum_{\lam \in \mcP_{21}(n-1) \cup \mcP_{22}(n-1)}
n(\lam) + 
\sum_{\lam \in \mcP_{12}(n-1) \cup \mcP_{22}(n-1)}
(n(\lam)+l(\lam))
\\ & \leq & 
N_{21}(n-1)+N_{22}(n-1)+N_{12}(n-1)+N_{22}(n-1)
\\ &&
+
\frac{2}{3}n(p_{12}(n-1)+p_{22}(n-1))
\\ &=& 
N_{21}(n-1)+N_{12}(n-1)+2N_{22}(n-1)+
\frac{2}{3}np_{12}(n) 
\end{eqnarray*}
by Lemma \ref{l.llam12} since $n-1 \geq 4$. 
\\
(2) Assume that $n \geq 5$. Then by induction, 
\begin{eqnarray*}
N_{21}(n)+N_{12}(n) &\leq& 
N_{21}(n-1)+N_{12}(n-1)+2N_{22}(n-1)+
\frac{2}{3}np_{12}(n) 
\\ &\leq& 
\frac{n}{3}(n-1)p_{12}(n-1)+\frac{2}{6}n(n-1)p_{22}(n-1)
+\frac{2}{3}np_{12}(n) 
\\ &=& 
\frac{1}{3}n(n-1)p_{12}(n)+\frac{2}{3}np_{12}(n)
\\ &=& 
\frac{1}{3}(n^2-n+2n)p_{12}(n) 
\\ &=& 
\frac{1}{3}n(n+1)p_{12}(n).
\end{eqnarray*}
For $1 \leq n \leq 4$, the results follows from the 
following table. 
\[
\begin{array}{c|cccc}
n& \mcP_{21}(n)& \mcP_{12}(n) &N_{21}(n)+N_{21}(n)&
\frac{1}{3}n(n+1)p_{12}(n) \\ \hline 
\\[-.4cm]
2 &(2)&(1^2)&0+1=1&2\\
3 &(3)&(1^3)&0+3=3&4\\
4 &(4)&(1^4)&0+6=6&\frac{20}{3}
\end{array}
\]
\end{proof}

\begin{thm}\label{t.Nn} 
\[
N(n) \leq \frac{1}{6}n(n+1)p(n)
\]
for $n \geq 1$. 
\end{thm}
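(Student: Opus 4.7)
The plan is to prove the bound by strong induction on $n$, exploiting the decomposition $\mcP_n=\mcP_{11}(n)\cup\mcP_{21}(n)\cup\mcP_{12}(n)\cup\mcP_{22}(n)$ together with the three preceding lemmas, each of which was tailored to control one of the pieces of $N(n)$.

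First I would dispose of the base cases $n=1,2$ by direct inspection: $N(1)=0$ and $N(2)=0+1=1$, both of which are comfortably bounded by $\frac{1}{6}n(n+1)p(n)$. For $n\geq 3$, the inductive hypothesis supplies $N(n-2)\leq\frac{1}{6}(n-2)(n-1)p(n-2)$, which is exactly the hypothesis of Lemma~\ref{l.N11}(2); that lemma then yields $N_{11}(n)\leq\frac{1}{6}n(n+1)p_{11}(n)$. Lemma~\ref{l.N12}(2) gives $N_{21}(n)+N_{12}(n)\leq\frac{1}{3}n(n+1)p_{12}(n)$, and Lemma~\ref{l.N22}(2) gives $N_{22}(n)\leq\frac{1}{8}n(n+1)p_{22}(n)$.

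Adding these three estimates and using $p_{21}(n)=p_{12}(n)$, together with the trivial inequality $\frac{1}{8}\leq\frac{1}{6}$, I would write
\begin{align*}
N(n) &= N_{11}(n)+N_{21}(n)+N_{12}(n)+N_{22}(n)\\
&\leq \frac{1}{6}n(n+1)p_{11}(n)+\frac{1}{3}n(n+1)p_{12}(n)+\frac{1}{8}n(n+1)p_{22}(n)\\
&\leq \frac{1}{6}n(n+1)\bigl(p_{11}(n)+2p_{12}(n)+p_{22}(n)\bigr)\\
&= \frac{1}{6}n(n+1)\bigl(p_{11}(n)+p_{21}(n)+p_{12}(n)+p_{22}(n)\bigr)\\
&= \frac{1}{6}n(n+1)p(n),
\end{align*}
closing the induction.

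There is no real obstacle left at this stage: the combinatorial content has been absorbed into the bijections $\psi,\psi_{21},\psi_{12}$ and into the length estimate of Lemma~\ref{l.llam12}, so this final step is just an assembly. The only point to verify carefully is that the coefficient distribution among the four classes matches up: the coefficient $\frac{1}{6}$ is exactly right for $p_{11}$, exactly right (as $\frac{2}{6}$) for the combined $p_{12}+p_{21}=2p_{12}$ contribution, and slack (at $\frac{1}{8}$) for $p_{22}$. This explains in retrospect why Lemma~\ref{l.N22} could afford the weaker constant $\frac{1}{8}$ while the sharper inductive bound $\frac{1}{6}$ on $N(n)$ still propagates.
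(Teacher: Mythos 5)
Your proposal is correct and follows essentially the same route as the paper's own proof: the same base cases $n=1,2$, the same induction via $N(n-2)$ feeding Lemma~\ref{l.N11}(2), and the same assembly of the four pieces using $p_{21}(n)=p_{12}(n)$. No issues.
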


\begin{proof} 
The result holds for $n=1,2$ since $N(1)=0$ and 
$N(2)=1$. Assume that $n \geq 3$. 
Then, by Lemma \ref{l.N22} and Lemma \ref{l.N12} 
\[N_{22}(n)\leq \frac{1}{6}n(n+1)p_{22}(n)\]
\[
N_{21}(n)+N_{12}(n) \leq 
\frac{1}{3}n(n+1)p_{12}(n).
\]
By induction, we may assume that
\[
N(n-2) \leq 
\frac{1}{6}(n-2)(n-1)p(n-2).
\]
Then it follows 
\[
N_{11}(n) \leq \frac{1}{6}n(n+1)p_{11}(n)
\]
by Lemma \ref{l.N11}. By these inequalities, we have 
\begin{eqnarray*}
N(n)&=&N_{11}(n)+N_{21}(n)+N_{12}(n)+N_{22}(n) 
\\ &\leq& 
\frac{1}{6}n(n+1)
(p_{11}(n)+p_{12}(n)+p_{21}(n)+p_{22}(n))
\\ &=& 
\frac{1}{6}n(n+1)p(n).
\end{eqnarray*}
\end{proof}

\begin{lem}\label{l.phin}
For $n \geq 6$, 
\[
|\mcF_{q,n}|-1 \geq \frac{1}{n}(q^n-q^{n-1}).
\]
\end{lem}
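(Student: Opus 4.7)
The approach is to invoke Gauss's classical formula
\[
|\mcF_{q,n}|=\frac{1}{n}\sum_{d\mid n}\mu(d)\,q^{n/d},
\]
which applies because $n\geq 6$ means the only polynomial excluded from $\mcF_q$, namely $t$ (of degree $1$), does not contribute to $\mcF_{q,n}$. Multiplying the desired inequality by $n$ and rearranging, the claim is equivalent to
\[
q^{n-1}+\sum_{\substack{d\mid n\\ d>1}}\mu(d)\,q^{n/d}\ \geq\ n.
\]

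The first step is to bound the Möbius tail from below. Every divisor $d>1$ of $n$ satisfies $n/d\leq\lfloor n/2\rfloor$, and $|\mu(d)|\leq 1$, so a crude geometric-series bound gives
\[
\sum_{\substack{d\mid n\\ d>1}}\mu(d)\,q^{n/d}\ \geq\ -\sum_{k=1}^{\lfloor n/2\rfloor}q^k\ =\ -\frac{q^{\lfloor n/2\rfloor+1}-q}{q-1}\ \geq\ -q^{\lfloor n/2\rfloor+1},
\]
where the last step uses $q\geq 2$. Hence it suffices to show
\[
q^{n-1}-q^{\lfloor n/2\rfloor+1}\ \geq\ n \qquad (n\geq 6,\ q\geq 2).
\]

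The remaining inequality is elementary. Since $n\geq 6$ forces $n-1\geq\lfloor n/2\rfloor+2$, factoring out $q^{\lfloor n/2\rfloor+1}$ yields
\[
q^{n-1}-q^{\lfloor n/2\rfloor+1}\ =\ q^{\lfloor n/2\rfloor+1}\bigl(q^{n-\lfloor n/2\rfloor-2}-1\bigr)\ \geq\ q^{\lfloor n/2\rfloor+1}(q-1)\ \geq\ q^{\lfloor n/2\rfloor+1}\ \geq\ 2^{\lfloor n/2\rfloor+1}.
\]
A direct check gives $2^{\lfloor n/2\rfloor+1}\geq n$ at $n=6,7$, after which the left-hand side at least doubles each time $n$ increases by $2$ while $n$ grows only linearly, so the bound persists for every $n\geq 6$. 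The whole argument is essentially mechanical; the only substantive ingredient is Gauss's formula itself, and I do not foresee any genuine obstacle in carrying it out.
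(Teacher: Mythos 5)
Your proposal is correct and follows essentially the same route as the paper: both invoke Gauss's necklace formula, bound the Möbius tail by a geometric series whose top exponent is at most $\lfloor n/2\rfloor$, and finish with an elementary comparison using $q\geq 2$ and $n\geq 6$ (the paper reduces to $q^{n-2}\geq n-1$, you reduce to $2^{\lfloor n/2\rfloor+1}\geq n$; the difference is cosmetic).
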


\begin{proof}
Since 
\[
|\mcF_{q,n}|=\frac{1}{n}
\sum_{d \mid n}\mu(n/d)q^d
\]
the claim is equivalent to 
\[
q^{n-1}-n \geq \sum_{d \mid n, \ d<n}(-\mu(n/d))q^d.
\]
Since $d \leq \frac{n}{2}$ for any $d \mid n$, $d<n$, 
it is enough to show that 
\[
q^{n-1}-n \geq \sum_{i=0}^mq^i
\]
for $m \leq \frac{n}{2}$. 
Since $n \geq 6$, we have $m+1 \leq n-2$ and 
\[
q^{n-1}-q^{m+1} \geq q^{n-1}-q^{n-2} 
\geq 
q^{n-2} \geq 2^{n-2} \geq n-1.
\]
Hence 
\[
q^{n-1}-n \geq q^{m+1}-1 
\geq \sum_{i=0}^mq^i . 
\]
\end{proof}

\begin{lem}\label{l.pn}
For $n \geq 8$,  
\[p(n) \leq \frac{n-1}{n} 2^{n-3}.\]
\end{lem}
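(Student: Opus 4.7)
The plan is a straightforward induction on $n$ based at $n = 8$, driven by the elementary recursive bound $p(n) \leq 2\,p(n-1)$. The base case is immediate: $p(8) = 22 \leq 28 = \tfrac{7}{8}\cdot 2^{5}$.

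The key ingredient is the auxiliary inequality $p(n) \leq 2\,p(n-1)$ for all $n \geq 2$. To establish it, first observe that appending a part equal to $1$ defines a bijection from $\mcP_{n-1}$ onto the set of partitions of $n$ with smallest part equal to $1$; hence the number of $\lam \vdash n$ with smallest part $\geq 2$ is exactly $p(n) - p(n-1)$. Next I would define a map from such $\lam = (\lam_1, \ldots, \lam_l)$ (with $\lam_l \geq 2$) into $\mcP_{n-1}$ by
\[
(\lam_1, \ldots, \lam_l) \mapsto (\lam_1, \ldots, \lam_{l-1}, \lam_l - 1).
\]
This is a well-defined partition of $n-1$ since $\lam_{l-1} \geq \lam_l > \lam_l - 1$, and it is injective because $\lam$ is recovered by adding $1$ back to the last part. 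Hence $p(n) - p(n-1) \leq p(n-1)$, as desired.

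Given these two ingredients, the inductive step for $n \geq 9$ proceeds as
\[
p(n) \leq 2\,p(n-1) \leq 2 \cdot \tfrac{n-2}{n-1}\, 2^{n-4} = \tfrac{n-2}{n-1}\, 2^{n-3} \leq \tfrac{n-1}{n}\, 2^{n-3},
\]
where the final inequality is equivalent to $n(n-2) \leq (n-1)^2$, i.e., $0 \leq 1$. The only step requiring any thought is the injection above; everything else is routine verification. The choice of base case $n = 8$ is essentially tight, since the desired bound already fails at $n = 7$ (where $p(7) = 15 > 96/7 = \tfrac{6}{7}\cdot 2^{4}$), so no shorter induction is available.
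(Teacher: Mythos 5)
Your proof is correct and takes essentially the same route as the paper: induction starting at $n=8$, driven by the bound $p(n)\leq 2p(n-1)$ and the elementary inequality $\frac{n-2}{n-1}\leq\frac{n-1}{n}$. The only difference is that you actually justify $p(n)\leq 2p(n-1)$ via an explicit injection, whereas the paper asserts this standard fact without proof.
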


\begin{proof} 
If $n=8$, then  
\[
p(8)=22
<\frac{7}{8}\times 2^5=28
\]
and the result holds.
By induction, assume that 
$p(n) \leq \frac{n-1}{n} 2^{n-3}$. 
Then, 
\begin{eqnarray*}
p(n+1) & \leq & 2p(n) 
\\ &\leq &
\frac{n}{n+1} \times \frac{n+1}{n} \times 
 \frac{n-1}{n}2^{n-2}
\\ &\leq&
\frac{n+1}{n}2^{n-2}.
\end{eqnarray*}
\end{proof}

\begin{lem}\label{l.Nn-2}
For $n \geq 8$,
\[
N(n) \leq 
\frac{1}{6}n(n-1)
(p(n)+\frac{2^{n-2}}{n}
).
\]
\end{lem}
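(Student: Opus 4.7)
The plan is to deduce this bound directly by combining the general upper bound from Theorem \ref{t.Nn} with the estimate on $p(n)$ from Lemma \ref{l.pn}, so there is no substantive new argument required. I first rewrite the desired right-hand side to isolate the extra term:
\[
\frac{1}{6}n(n-1)\Big(p(n)+\frac{2^{n-2}}{n}\Big)
= \frac{1}{6}n(n-1)p(n) + \frac{(n-1)2^{n-2}}{6}.
\]
Similarly, the bound in Theorem \ref{t.Nn} can be decomposed as
\[
\frac{1}{6}n(n+1)p(n) = \frac{1}{6}n(n-1)p(n) + \frac{n\,p(n)}{3}.
\]

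Comparing the two expressions, it suffices to show
\[
\frac{n\,p(n)}{3} \leq \frac{(n-1)2^{n-2}}{6},
\]
which is equivalent to $p(n) \leq \frac{n-1}{n}\,2^{n-3}$. This is exactly Lemma \ref{l.pn}, valid for $n \geq 8$. Combining the chain of inequalities gives the claim.

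There is no real obstacle here; the lemma is essentially a repackaging of Theorem \ref{t.Nn} into a form that will be more convenient in the subsequent analysis (presumably because the factor $\tfrac{2^{n-2}}{n}$ matches well with the lower bound on $|\mcF_{q,n}|-1$ from Lemma \ref{l.phin}). The only thing to check carefully is that the threshold $n \geq 8$ of Lemma \ref{l.pn} is respected, which matches the hypothesis of the statement.
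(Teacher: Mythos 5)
Your proof is correct and is essentially identical to the paper's: both reduce the claim, via Theorem \ref{t.Nn}, to the inequality $p(n) \leq \frac{n-1}{n}2^{n-3}$ supplied by Lemma \ref{l.pn}. Your version simply spells out the algebra the paper leaves implicit.
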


\begin{proof}
It suffices to show that 
\[
\frac{1}{6}n(n+1)p(n)
\leq 
\frac{1}{6}n(n-1)
(p(n)+\frac{2^{n-2}}{n})
\]
by Theorem \ref{t.Nn}. But this is equivalent to 
\[
p(n) \leq \frac{n-1}{n} 2^{n-3}
\]
and the result follows from the previous Lemma. 
\end{proof}

\begin{thm}\label{t.Nn-phi}
If $(n,q) \ne (2,2), (2,3), (3,2)$, then 
\[
N(n) \leq 
\frac{1}{6}n(n-1)
(p(n)+\frac{|\mcF_{q,n}|-1}{q}
).
\]
\end{thm}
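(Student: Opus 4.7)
\emph{Plan.} I would split the argument into two cases by the size of $n$.

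For $n\ge 8$, Lemma~\ref{l.Nn-2} already gives $N(n)\le \tfrac{1}{6}n(n-1)\bigl(p(n)+2^{n-2}/n\bigr)$, so it suffices to prove $2^{n-2}/n \le (|\mcF_{q,n}|-1)/q$. Lemma~\ref{l.phin} yields $|\mcF_{q,n}|-1\ge (q^n-q^{n-1})/n$ for $n\ge 6$, hence
\[
\frac{|\mcF_{q,n}|-1}{q}\ge \frac{q^{n-2}(q-1)}{n},
\]
and the elementary bound $q^{n-2}(q-1)\ge 2^{n-2}$ (with equality at $q=2$) completes this case.

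For the finite range $1\le n\le 7$ with $(n,q)$ not in the excluded list, I would verify the inequality by direct computation. The reduction that keeps the case analysis short is monotonicity: for each fixed $n\ge 2$, the quantity $(|\mcF_{q,n}|-1)/q$ is increasing in the integer $q\ge 2$ (immediate from the M\"obius formula $|\mcF_{q,n}|=\tfrac{1}{n}\sum_{d\mid n}\mu(n/d)q^d$, whose leading term $q^{n-1}/n$ dominates the $-1/q$ correction). So one only has to test the smallest admissible $q$ for each $n$: namely $q=4$ for $n=2$, $q=3$ for $n=3$, and $q=2$ for $n\in\{4,5,6,7\}$; the case $n=1$ is trivial, as both sides vanish. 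For each such pair one tabulates $p(n)$, $N(n)$ and $|\mcF_{q,n}|$ and verifies a single numerical inequality.

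The borderline case worth highlighting is $(n,q)=(4,2)$, where $p(4)=5$, $N(4)=12$, $|\mcF_{2,4}|=3$, and the right-hand side equals $\tfrac{1}{6}\cdot 4\cdot 3\cdot(5+1)=12$, so the bound is sharp. This saturation both pins down the form of the inequality and explains why the excluded triples $(2,2)$, $(2,3)$, $(3,2)$ must appear as they do; beyond confirming the monotonicity claim and handling this single saturated case, the argument is routine bookkeeping, and I do not anticipate a substantive obstacle.
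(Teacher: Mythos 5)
Your proposal is correct and follows essentially the same route as the paper: for $n\ge 8$ it combines Lemma~\ref{l.Nn-2} with Lemma~\ref{l.phin} exactly as the paper does, and for $n\le 7$ it reduces to a finite numerical check (the paper organizes that check by noting $|\mcF_{q,n}|\ge q+1$ so that $(|\mcF_{q,n}|-1)/q\ge 1$, while you invoke monotonicity in $q$ and test the minimal admissible $q$ for each $n$ --- the same verification in slightly different bookkeeping, agreeing at the sharp case $(n,q)=(4,2)$). No gaps.
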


\begin{proof}
If $n \geq 8$, then by Lemma \ref{l.phin}
\begin{eqnarray*}
\frac{2^{n-2}}{n}
\leq
\frac{q^{n-2}}{n}
\leq
\frac{q^{n-1}}{n}
\cdot 
\frac{q-1}{q}
\leq 
\frac{|\mcF_{q,n}|-1}{q-1}
\cdot 
\frac{q-1}{q}
=
\frac{|\mcF_{q,n}|-1}{q}
\end{eqnarray*}
and the result follows from Lemma \ref{l.Nn-2}. 
The case $n \leq 7$ follows from the following table:

\[
\begin{array}{c|cccc}
n & N(n) & 
\binom{n}{2} & p(n) &  
|\mcF_{q,n}| \\ \hline
2 & 1 & 1 & 2 &\frac{1}{2}(q^2-q) \\ 
3 & 4 & 3 & 3 &\frac{1}{3}(q^3-q) \\ 
4 & 12 & 6 & 5 &\frac{1}{4}(q^4-q^2) \\ 
5 & 26 & 10 & 7 & \frac{1}{5}(q^5-q) \\
6 & 57 & 15 & 11 &\frac{1}{6}(q^6-q^3-q^2+q) \\
7 & 103 & 21 & 15 & \frac{1}{7}(q^7-q)
\end{array}
\]
In fact, if $2 \leq n \leq 7$ and $(n,q) \ne (2,2), (2,3), (3,2)$, 
then $|\mcF_{q,n}| \geq q+1$, that is, 
\[
1 \leq \frac{|\mcF_{q,n}|-1}{q}.
\]
On the other hand, in these cases, the inequality
\[N(n) \leq \frac{1}{3}\binom{n}{2}(p(n)+1)\]
holds. 
\end{proof}

\begin{cor}\label{l.gu}
(1) If $(n,q) \ne (2,2)$, then 
\[N(n) \leq \frac{1}{6}n(n-1)(p(n)+
\frac{|\mcF_{q,n}|-1}{q-1}).\]
(2) Let $\mcF^U_{q,n}$ be the set of all $U$ irreducible 
polynomials (See section 5). 
If $(n,q) \ne (2,2), (2,3), (3,2)$ then 
\[N(n) \leq \frac{1}{6}n(n-1)(p(n)+
\frac{|\mcF_{q,n}^U|-1}{q}).\]
\end{cor}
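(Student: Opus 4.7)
The plan is to derive both parts from Theorem \ref{t.Nn-phi}, handling only the small-$q$ exceptional pairs by direct calculation.

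For part (1), I would first observe that whenever $(n,q) \ne (2,2),(2,3),(3,2)$, Theorem \ref{t.Nn-phi} already gives the stronger bound with $q$ in the denominator, and the trivial inequality $\frac{1}{q}\leq \frac{1}{q-1}$ (valid for $q\geq 2$) upgrades it to the bound claimed here. It then remains to verify the two remaining pairs $(2,3)$ and $(3,2)$ directly, using the table in Theorem \ref{t.Nn-phi}: for $(n,q)=(2,3)$, one has $|\mcF_{3,2}|=3$, so the right side becomes $\frac{1}{3}\bigl(2+\frac{2}{2}\bigr)=1=N(2)$; for $(n,q)=(3,2)$, one has $|\mcF_{2,3}|=2$, so the right side becomes $1\cdot \bigl(3+\frac{1}{1}\bigr)=4=N(3)$. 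Both hold with equality.

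For part (2), the argument will parallel Theorem \ref{t.Nn-phi}. The first step is to establish, using the Möbius-type enumeration of $U$-irreducible polynomials introduced in section 5, an analogue of Lemma \ref{l.phin}: for $n\geq 6$ one should have
\[
|\mcF^U_{q,n}|-1 \geq \frac{1}{n}\bigl(q^n - q^{n-1}\bigr).
\]
Combined with Lemma \ref{l.Nn-2}, this upgrades to the inequality $\frac{2^{n-2}}{n} \leq \frac{|\mcF^U_{q,n}|-1}{q}$ for $n\geq 8$, yielding the corollary in that range. For $2\leq n\leq 7$ with $(n,q)\ne (2,2),(2,3),(3,2)$, I would tabulate $|\mcF^U_{q,n}|$ explicitly from the description in section 5, and check the inequality by hand; here the auxiliary bound $N(n)\leq \frac{1}{3}\binom{n}{2}(p(n)+1)$ noted in the proof of Theorem \ref{t.Nn-phi} will reduce the task to verifying $1 \leq \frac{|\mcF^U_{q,n}|-1}{q}$.

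The main obstacle is establishing the lower bound on $|\mcF^U_{q,n}|$: because $U$-irreducible polynomials correspond to Frobenius-twisted Galois orbits rather than ordinary ones, the Möbius inversion behind Lemma \ref{l.phin} must be redone with the correct formula, and one must confirm that the leading-term growth $\sim q^n/n$ survives the twist. Once that is in place, the crude estimates $q^{n-2}\geq 2^{n-2}$ and $q^{n-1}\geq q^{m+1}$ from the proof of Lemma \ref{l.phin} carry over essentially unchanged, and the only remaining work is the finite table for $n\leq 7$, whose exceptions should match $(2,2),(2,3),(3,2)$ exactly.
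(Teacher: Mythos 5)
Part (1) of your proposal is correct and is exactly the paper's argument: for $(n,q)\ne(2,2),(2,3),(3,2)$ the bound follows from Theorem \ref{t.Nn-phi} together with $\frac{1}{q}\le\frac{1}{q-1}$, and your two numerical checks for $(2,3)$ and $(3,2)$ (both holding with equality) agree with the table in that proof.

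Part (2) has a genuine gap. Your whole plan hinges on a lower bound for $|\mcF^U_{q,n}|$, which you explicitly defer (``the M\"obius inversion must be redone with the correct formula, and one must confirm that the leading-term growth survives the twist''). That deferred step is precisely the nontrivial input; without it neither the $n\ge 8$ range nor the finite table for $2\le n\le 7$ can be completed, so part (2) is not proved. It is also a detour: the paper instead uses the exact identity
\[
|\mcF_{q,n}^U|=
\begin{cases}
|\mcF_{q,n}|+1 & (n=1),\\
|\mcF_{q,n}|-1 & (n=2),\\
|\mcF_{q,n}| & (n>2),
\end{cases}
\]
coming from Wall's enumeration of $U$-irreducible polynomials. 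With this in hand, for every $n\ge3$ the quantity $\frac{|\mcF^U_{q,n}|-1}{q}$ coincides with $\frac{|\mcF_{q,n}|-1}{q}$, so part (2) is literally Theorem \ref{t.Nn-phi} with no new asymptotics and no new table; the only case needing separate treatment is $n=2$, where one checks
\[
\frac{|\mcF^U_{q,2}|-1}{q}=\frac{\frac{1}{2}q(q-1)-2}{q}\ge 1 \quad (q\ge 4),
\]
which is exactly why $(2,2)$ and $(2,3)$ are excluded. If you want to keep your route, you must actually establish the counting statement for $|\mcF^U_{q,n}|$; the cleanest repair is to prove the displayed identity rather than a twisted asymptotic analogue of Lemma \ref{l.phin}.
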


\begin{proof}
(1) If $(n,q) \ne (2,3), (3,2)$ then the result follows from 
Theorem \ref{t.Nn-phi}. The cases $(n,q)=(2,3), (3,2)$ follows from the 
table above. 
\\
(2) We have 
\[
|\mcF_{q,n}^U|=
\begin{cases}
|\mcF_{q,n}|+1 & (n=1) \\ 
|\mcF_{q,n}|-1 & (n=2) \\ 
|\mcF_{q,n}|& (n>2) \\ 
\end{cases}.
\]
If $n \geq 3$ then the result follows from Theorem 
\ref{t.Nn-phi}. If $n=2$ and $(n,q) \ne (2,2), (2,3)$, then 
\[
\frac{|\mcF^U_{q,2}|-1}{q}=
\frac{\frac{1}{2}q(q-1)-2}{q}\geq 1
\]
and 
\[
N(2)=1 \leq \frac{1}{3}(2+
\frac{|\mcF^U_{q,2}|-1}{q}).
\]
\end{proof}

\begin{rem} 
{\rm
Here, 
we consider some possibilities to improve the inequalities above. 
If $n \geq 6$, then by Lemma \ref{l.phin}, 
\[
|\mcF_{q,n}|-1 \geq \frac{q^n-q^{n-1}}{n} \geq 
\frac{q^{n-1}}{n} \geq q.
\]
We expect that the inequality 
\[N(n) \leq \frac{1}{6}n(n-1)(p(n)+1)\]
holds for any $n$.  
On the other hand, in \cite[Conjecture 4.2]{S}, 
a stronger conjecture has been proposed, that is, 
\[N(n) \leq \frac{1}{6}n(n-1)p(n)\]
for any $n \geq 7$. We have not been able to prove these 
stronger inequalities. 
}
\end{rem}


\section{Proof of Theorem \ref{MAIN2}}
In this section, we prove Theorem \ref{MAIN2}. 

\begin{lem}\label{l.nchoose2}
Let $\mu \vdash n$. Assume that $l=l(\mu) \geq2$. 
Then  
\[
\begin{pmatrix}
n \\ 2
\end{pmatrix}
\geq 
\sum_{i=1}^l
\begin{pmatrix}
\mu_i+1 \\ 2
\end{pmatrix}
\]
unless $\mu=(n-1,1)$. 
\end{lem}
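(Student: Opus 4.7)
The plan is to rewrite the desired inequality as a statement about the second elementary symmetric polynomial $e_2(\mu)=\sum_{i<j}\mu_i\mu_j$ in the parts of $\mu$, and then run a short case analysis on $l=l(\mu)$. Using $\binom{n}{2}=\tfrac{1}{2}n(n-1)$, $\binom{\mu_i+1}{2}=\tfrac{1}{2}\mu_i(\mu_i+1)$, $n=\sum_i\mu_i$, and the identity $n^2=\sum_i\mu_i^2+2\sum_{i<j}\mu_i\mu_j$, I would compute
\[
\binom{n}{2}-\sum_{i=1}^{l}\binom{\mu_i+1}{2}
=\tfrac{1}{2}\Bigl(n(n-1)-\sum_i\mu_i^2-n\Bigr)
=\sum_{1\le i<j\le l}\mu_i\mu_j-n.
\]
Thus the lemma is equivalent to the claim that $\sum_{i<j}\mu_i\mu_j\ge n$ whenever $l\ge 2$ and $\mu\ne(n-1,1)$.

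For the case $l=2$, the inequality reduces to $\mu_1\mu_2\ge\mu_1+\mu_2$, i.e.\ $(\mu_1-1)(\mu_2-1)\ge 1$, which holds unless $\mu_2=1$ (using $\mu_1\ge\mu_2\ge 1$). That single failure is precisely $\mu=(n-1,1)$, the unique exception listed in the statement.

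For the case $l\ge 3$, the remaining parts $\mu_2,\ldots,\mu_l$ sum to at least $l-1\ge 2$, so $\mu_1\le n-2$. Using $\mu_i\le\mu_1$ for each $i$, one bounds
\[
\sum_i\mu_i^2\le\mu_1\sum_i\mu_i=\mu_1\,n\le(n-2)n,
\]
and substituting into the identity above gives $2\sum_{i<j}\mu_i\mu_j=n^2-\sum_i\mu_i^2\ge 2n$, as required.

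The argument is entirely elementary, so I do not foresee a genuine obstacle. The only mildly delicate point is the opening algebraic rearrangement, which applies $n=\sum_i\mu_i$ twice (once to absorb $\sum_i\mu_i$ and once to convert $n^2-\sum_i\mu_i^2$ into $2e_2(\mu)$). It is worth noting that the exception $\mu=(n-1,1)$ is not imposed by hand but emerges naturally as the unique equality-failing partition in the two-part case.
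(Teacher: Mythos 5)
Your proof is correct. The opening identity
\[
\binom{n}{2}-\sum_{i=1}^{l}\binom{\mu_i+1}{2}=\sum_{1\le i<j\le l}\mu_i\mu_j-n
\]
checks out, the $l=2$ case correctly isolates the exception via $(\mu_1-1)(\mu_2-1)\ge 1$ failing exactly when $\mu_2=1$, i.e.\ $\mu=(n-1,1)$, and for $l\ge 3$ the chain $\sum_i\mu_i^2\le\mu_1\sum_i\mu_i=\mu_1 n\le(n-2)n$ is valid because $\mu_2+\cdots+\mu_l\ge l-1\ge 2$ forces $\mu_1\le n-2$. This is a genuinely different route from the paper, which argues by induction on $n$: after checking $n\le 4$ directly, it chooses $\nu\vdash n-1$ by decreasing one part of $\mu$ (arranged so that $\nu\ne(n-2,1)$), applies the inductive hypothesis together with $\binom{n}{2}=\binom{n-1}{2}+(n-1)$, and notes that restoring the decreased part increases the right-hand side by at most $n-1$. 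Your argument buys a closed-form, non-inductive proof in which the lemma becomes the transparent statement $e_2(\mu)\ge n$ and the exceptional partition emerges as the unique failure rather than being excluded by hand; the paper's induction is shorter on the page but leaves the existence and choice of $\nu$, and the final comparison of binomial sums, implicit. Both are elementary and both are complete.
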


\begin{proof}
It is clear that the result holds for $n \leq 4$. 
Suppose that $n \geq 5$. 
There exists $\nu \vdash n-1$ such that 
$\nu_i=\mu_i$ or $\nu_i=\mu_i-1$ for any $i$ and 
$\nu \ne (n-2,1)$. Then by induction, 

\[
\begin{pmatrix}
n \\ 2
\end{pmatrix}
=
\begin{pmatrix}
n-1 \\ 2
\end{pmatrix}
+(n-1)
\geq 
\sum_{i=1}^l
\begin{pmatrix}
\nu_i+1 \\ 2
\end{pmatrix}
+(n-1)
\geq
\sum_{i=1}^l
\begin{pmatrix}
\mu_i+1 \\ 2
\end{pmatrix}.
\]
\end{proof}

\begin{lem}\label{l.Nmi} 
Suppose that $l \geq 2$. For positive integers $m_i>0$ 
$(1 \leq i \leq l)$ such that $\sum_{i=1}^l m_i=n$, 
we have 
\[
3\sum_{i=1}^l
\bar{N}(m_i) \leq 
\begin{pmatrix}
n \\ 2
\end{pmatrix}.
\]
\end{lem}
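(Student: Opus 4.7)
The plan is to combine Theorem \ref{t.Nn} with Lemma \ref{l.nchoose2}. First, I will rewrite the bound of Theorem \ref{t.Nn} in the equivalent form $3\bar{N}(m) \leq \binom{m+1}{2}$, valid for every $m \geq 1$, and sum over $i = 1, \ldots, l$ to obtain
\[
3 \sum_{i=1}^l \bar{N}(m_i) \leq \sum_{i=1}^l \binom{m_i+1}{2}.
\]
Since the right-hand side is symmetric in the $m_i$, I may rearrange $(m_1, \ldots, m_l)$ in decreasing order to obtain a partition $\mu \vdash n$ of length $l(\mu) = l \geq 2$.

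The main step is then to invoke Lemma \ref{l.nchoose2}: whenever $\mu \neq (n-1, 1)$, the lemma directly yields $\sum_i \binom{\mu_i+1}{2} \leq \binom{n}{2}$, completing the proof in this generic case.

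The only obstacle is the exceptional partition $\mu = (n-1, 1)$, where Lemma \ref{l.nchoose2} does not apply and the naive sum of triangular numbers exceeds $\binom{n}{2}$ by exactly $\binom{2}{2} = 1$. The saving observation is that this exception forces $l = 2$ with one part equal to $1$, and $\bar{N}(1) = N(1)/p(1) = 0$. Hence the left-hand side collapses to $3\bar{N}(n-1)$, and a direct application of Theorem \ref{t.Nn} gives $3\bar{N}(n-1) \leq 3 \cdot \tfrac{1}{6}(n-1)n = \binom{n}{2}$, closing the gap exactly.
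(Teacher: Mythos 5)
Your proof is correct and follows essentially the same route as the paper: both reduce Theorem \ref{t.Nn} to the bound $3\bar{N}(m) \leq \binom{m+1}{2}$, invoke Lemma \ref{l.nchoose2} for the generic case, and dispose of the exceptional arrangement $(n-1,1)$ by noting $\bar{N}(1)=0$ so that $3\bar{N}(n-1) \leq \binom{n}{2}$ directly.
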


\begin{proof} 
If $(m_1,m_2) \ne (n-1,1), (1,n-1)$, then 
\[
3\sum_{i=1}^l
\bar{N}(m_i) \leq 
\sum_{i=1}^l
\begin{pmatrix}
m_i+1 \\ 2
\end{pmatrix}
\leq 
\begin{pmatrix}
n \\ 2
\end{pmatrix}
\]
by Lemma \ref{l.nchoose2}. If $l=2$ and $(m_1,m_2)=(n-1,1)$, then  
\[
3(\bar{N}(m_1)+\bar{N}(m_2))
=
3\bar{N}(m_1)
\leq 
\begin{pmatrix}
m_1+1 \\ 2
\end{pmatrix}
=
\begin{pmatrix}
n \\ 2
\end{pmatrix}
\]
since $N(m_2)=N(1)=0$. 
\end{proof}

\begin{cor}\label{c.dNmi} Let 
$l \geq 1$, $d_i, m_i >0$ $(1 \leq i \leq l)$, 
$\sum_{i=1}d_im_i=n$. 
Then we have 
\[
3\sum_{i=1}^ld_i \bar{N}(m_i) 
\leq 
\begin{pmatrix}
n \\ 2
\end{pmatrix}
\]
except for the case $l=1$, $d_1=1$, $m_1=n$.
\end{cor}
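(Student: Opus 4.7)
The plan is to deduce Corollary \ref{c.dNmi} directly from Lemma \ref{l.Nmi} by reinterpreting the coefficients $d_i$ as multiplicities. Given positive integers $d_i, m_i$ ($1 \leq i \leq l$) with $\sum_{i=1}^l d_i m_i = n$, I will form the multiset of positive integers obtained by listing $m_i$ with multiplicity $d_i$ for each $i$. Its cardinality is $D := \sum_{i=1}^l d_i$ and the sum of its entries is $\sum_{i=1}^l d_i m_i = n$.

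If $D \geq 2$, then applying Lemma \ref{l.Nmi} to this multiset (viewed as a $D$-tuple of positive integers summing to $n$) gives
\[
3 \sum_{i=1}^l d_i \bar{N}(m_i) \;=\; 3 \sum_{\text{entries } m \text{ of the multiset}} \bar{N}(m) \;\leq\; \binom{n}{2},
\]
which is exactly the desired bound. If instead $D = 1$, then necessarily $l = 1$, $d_1 = 1$, and so $m_1 = n$; this is precisely the exceptional case allowed in the statement.

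There is no genuine obstacle here: the corollary is essentially a rewording of the lemma, since the quantity $\bar{N}(m_i)$ depends only on the size $m_i$ and not on the weight $d_i$, so that $d_i \bar{N}(m_i)$ is naturally the sum of $d_i$ copies of $\bar{N}(m_i)$. The only thing to check is that the reduction $D \geq 2$ covers every non-exceptional input, which follows because $D = 1$ forces $(l, d_1, m_1) = (1, 1, n)$.
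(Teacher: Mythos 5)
Your reduction is correct and is exactly the intended derivation: expanding each $m_i$ into $d_i$ copies produces $D=\sum_i d_i$ positive integers summing to $n$, Lemma \ref{l.Nmi} applies whenever $D\geq 2$, and $D=1$ forces the excluded case $(l,d_1,m_1)=(1,1,n)$. The paper states the corollary without proof precisely because this is the immediate argument, so there is nothing to add.
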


\vspace{.8cm}

\noindent
{\bf (Proof of Theorem \ref{MAIN2})} 
\\
(1) We set $\mathrm{supp} (\al)=\{f_1, \dots, f_l\}$.  
Then 
\[\sum_{i=1}^l d(f_i)\al(f_i)=n\]
and $(l,d(f_1), \al(f_1)) \ne (1,1,n)$ since
$\al \not\in M_n^{(1)}(\mcF_q)$. 
By Corollary \ref{c.dNmi}, 
\[
3\sum_{i=1}^ld(f_i) \bar{N}(\al(f_i)) 
\leq 
\begin{pmatrix}
n \\ 2
\end{pmatrix}.
\]
(2) If $\al \in M_n^{(1)}(\mcF_q)$, then  
$p(\al)=p(n)$, 
$|M_n^{(1)}(\mcF_q)|=|\mcF_{q,1}|=q-1$. 
On the other hand, if 
$\al \in M_n^{(n)}(\mcF_q)$, then  
$p(\al)=p(1)=1$, 
$|M_n^{(1)}(\mcF_q)|=|\mcF_{q,n}|$. Hence 

\begin{eqnarray*}
\begin{pmatrix}
n \\ 2
\end{pmatrix}
(\sum_{\al \in M_n^{(1)}(\mcF_q)}p(\al)+
\sum_{\al \in M_n^{(n)}(\mcF_q)}p(\al)-1)
&=&
\begin{pmatrix}
n \\ 2
\end{pmatrix}
((q-1)p(n)+|\mcF_{q,n}|-1)
\\ &\geq& 
3(q-1)N(n)
\end{eqnarray*}
by Corollary \ref{l.gu}(1) since $(n,q) \ne (2,2)$. 
On the other hand, 
\[
\sum_{\al \in M_n^{(1)}(\mcF_q)}
(\sum_{f \in \mcF_q}d(f)\bar{N}(\al(f)))
p(\al)
=
\sum_{\al \in M_n^{(1)}(\mcF_q)}
\bar{N}(n)p(n)
=(q-1)N(n)
\]
\[
\sum_{\al \in M_n^{(n)}(\mcF_q)}
(\sum_{f \in \mcF_q}d(f)\bar{N}(\al(f)))
p(\al)
=
\sum_{\al \in M_n^{(n)}(\mcF_q)}
n \bar{N}(1)p(1)=0.
\]
Hence it follows 
\[
\sum_{\al \in M_n^{(1)}(\mcF_q)\cup M_n^{(n)}(\mcF_q)}
(\sum_{f \in \mcF_q}d(f)\bar{N}(\al(f)))
p(\al)
=(q-1)N(n)
\]
and this completes the proof.

\vspace{.8cm}

We add a property of the integer $h(G)$. 
N. Chigira conjectured 
that $|G'|$ divides $h(G)$ for any finite group $G$ 
where $G'$ is the commutator group of $G$ 
(cf.\cite[p.385, Remark]{Ha}).
We prove that this property holds for $G=\GL_n(q)$. 

\begin{prop}\label{p.commutator}
Let $G=\GL_n(q)$. Then $|G'|$ divides $h(G)$. 
\end{prop}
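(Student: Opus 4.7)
The plan is to split $|G'|$ into its $q$-part and $q'$-part and show each divides the corresponding part of $h(G)$. Recall that $G'=\SL_n(q)$ whenever $(n,q)\ne(2,2)$, so for $n\geq 2$ and $(n,q)\ne(2,2)$ we have
\[
|G'| = q^{\binom{n}{2}}\cdot\frac{\psi_n(q)}{q-1},\qquad |G'|_q=q^{\binom{n}{2}},\qquad |G'|_{q'}=\prod_{i=2}^{n}(q^i-1).
\]

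First I would dispose of the degenerate cases. If $n=1$ then $G$ is abelian and $|G'|=1$, so the claim is trivial. If $(n,q)=(2,2)$ then the introduction gives $h(\GL_2(2))=3=|G'|$, and we are done.

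For the $q$-part, Theorem \ref{MAIN1} directly supplies $v_q(h(G))\geq\binom{n}{2}=v_q(|G'|)$, so $|G'|_q\mid h(G)$.

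For the $q'$-part, I would invoke the factorisation already set up in Section~2: writing $h(G)=q^{v_q(h(G))}\cdot h(G)_{q'}$, one has
\[
h(G)_{q'}=\prod_{\bs{\lam}\in M_n(\mcF_q,\mcP)}\prod_{f\in\mcF_q}\frac{\tilde H_{\bs{\lam}(f)}(q^{d(f)})}{\prod_i\psi_{m_i(\bs{\lam}(f))}(q^{d(f)})},
\]
and each inner factor was shown there to be an integer. The key step is to single out the one function $\bs{\lam}_0$ defined by $\bs{\lam}_0(t-1)=(n)$ and $\bs{\lam}_0(f)=\emptyset$ otherwise: since the row partition $(n)$ has hook lengths $1,2,\ldots,n$, its contribution equals
\[
\frac{\tilde H_{(n)}(q)}{\psi_1(q)}=\frac{\psi_n(q)}{q-1}=|G'|_{q'}.
\]
All remaining factors across the other $\bs{\lam}$ are integers, so $|G'|_{q'}\mid h(G)_{q'}$, and combining the two divisibilities yields $|G'|\mid h(G)$.

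There is no real obstacle here: Theorem \ref{MAIN1} does the heavy lifting for the $q$-part, and the $q'$-part is visible in a single factor of the integer factorisation of $h(G)_{q'}$ that Section~2 already records. The only thing that needs a moment of care is matching the conventions in Section~2 to confirm that the $\bs{\lam}_0$ isolated above genuinely contributes the factor $\psi_n(q)/(q-1)$; once that is verified, the proposition follows.
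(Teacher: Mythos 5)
Your proposal is correct and follows essentially the same route as the paper: the $q$-part is handled by Theorem \ref{MAIN1}, and the $q'$-part by isolating the single class function $\bs{\lam}$ sending a degree-one polynomial to the partition $(n)$, whose $q'$-ratio $\tilde H_{(n)}(q)/\psi_1(q)=\psi_n(q)/(q-1)$ equals $|G'|_{q'}$ while all other factors are integers. The verification of that factor is exactly as you describe, so nothing further is needed.
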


\begin{proof}
If $(n,q)=(2,2)$ then $h(G)=3=|G'|$. 
So we may assume that $(n,q) \ne (2,2)$. 
Then $G'=\SL_n(q)$ and $|G'|=|G|/(q-1)$. 
By Theorem \ref{MAIN1},  
\[v_q(h(G)) \geq 
\begin{pmatrix}
n \\2 
\end{pmatrix}
=v_q(|G|)=v_q(|G'|). 
\]
Let $\bs{\lam}: f \mapsto \lam=(n)$ for some $f \in \mcF_{q,1}$. 
Then 
\[|K_{\bs{\lam}}|_{q'}/(d_{\bs{\lam}})_{q'}=
\frac{\psi_n(q)}{q-1}=|G'|_{q'}\]
and so 
$|G'|_{q'}$ divides $h(G)$. 
\end{proof}


\section{Unitary groups}

For a monic polynomial
\[f(t)= t^d+a_{d-1}t^{d-1}+\cdots+a_0\]
over $\bF_{q^2}$ with $a_0\neq 0$, we denote
\[\tilde{f}(t)= a_0^{-q}(a_0^q t^d+a_1^q t^{d-1}+\cdots+1).\]
We call a monic polynomial $f(t)$ U-irreducible if $f(t)$ is irreducible and $f(t)=\tilde{f}(t)$, or $f(t)=g(t)\tilde{g}(t)$, where $g(t)$ is irreducible and $g(t)\neq\tilde{g}(t)$.
Conjugacy classes and irreducible characters of $\GU_n(q)$ are parametrized by maps from the set $\mcF_{q}^{U}$ of monic U-irreducible polynomials excluding $f(t)=t$ to the set $\mcP$ of partitions.
By a theorem of Wall \cite{wall}, there exists a bijection from the set of maps $\boldsymbol{\lambda}: \mcF_{q}^{U} \to \mcP$ satisfying $\Vert\boldsymbol{\lambda}\Vert =n$ to the conjugacy classes of $\GU_n(q)$.

\begin{lem}
  The size of the conjugacy class $K_{\boldsymbol{\lambda}}^U$ corresponding to a map $\boldsymbol{\lambda}\in M_n(\mcF_{q}^{U})$ is
  \[\frac{|\GU_n(q)|}{(-1)^n a_U({\boldsymbol{\lambda}})},\]
  where
  \[a_U({\boldsymbol{\lambda}})= \prod_{f\in \mcF_q^{U}}a_{\boldsymbol{\lambda}(f)}((-q)^{\deg(f)}).\]
\end{lem}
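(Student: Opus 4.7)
The approach is to invoke Wall's classical parametrization \cite{wall} of conjugacy classes in $\GU_n(q)$ together with the ``Ennola substitution'' principle, namely that the centralizer orders in $\GU_n(q)$ are obtained from the corresponding formulas in $\GL_n(q)$ (recalled in Section~2) by replacing $q$ with $-q$, up to a global sign $(-1)^n$. Once the centralizer order is identified as $(-1)^n a_U(\bs{\lam})$, the claimed class-size formula follows immediately by the orbit--stabilizer relation.

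The first step is to fix a representative $g \in K^U_{\bs{\lam}}$ and exploit the primary decomposition of its action on the natural module $V = \bF_{q^2}^n$. By Wall, $V$ decomposes as an orthogonal direct sum (with respect to the Hermitian form) of $g$-invariant subspaces $V_f$ indexed by $f \in \mcF_q^U$, and the centralizer factors accordingly as $C_{\GU_n(q)}(g) = \prod_{f \in \mcF_q^U} C_f$. Hence it suffices to compute each $|C_f|$ as a function of $\lam := \bs{\lam}(f)$ and $d(f)$.

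The second step is the case analysis forced by the definition of $\mcF_q^U$. If $f = g_0 \tilde g_0$ with $g_0 \neq \tilde g_0$ irreducible, then the Hermitian form pairs the two primary summands hyperbolically, so $C_f$ is isomorphic to a $\GL$-centralizer of a $\lam$-type element over $\bF_{q^{2\deg g_0}}$; the formula from Section~2 then yields $|C_f| = a_\lam(q^{d(f)}) = a_\lam((-q)^{d(f)})$, since $d(f) = 2\deg g_0$ is even and no sign appears. If instead $f = \tilde f$ is self-conjugate and irreducible of degree $d(f)$, the corresponding factor is a genuine unitary centralizer, and Wall's direct calculation gives $|C_f| = (-1)^{d(f)|\lam|} a_\lam((-q)^{d(f)})$; the sign accounts precisely for the behavior of the $q$-power prefactor $q^{|\lam|+2n(\lam)-s(\lam)}$ and the $\psi$-factors under $q \mapsto -q$.

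The final step is sign bookkeeping: multiplying over all $f \in \mcF_q^U$ and using $n = \sum_f d(f) |\bs{\lam}(f)|$, the accumulated sign collapses to $(-1)^n$, yielding $|C_{\GU_n(q)}(g)| = (-1)^n a_U(\bs{\lam})$ and hence the lemma. The main obstacle is the sign bookkeeping in the self-conjugate case: one must verify precisely which factors of $a_\lam$ change sign under the Ennola substitution, and confirm that the combined contributions from the paired and self-conjugate U-irreducibles match the asserted global sign $(-1)^n$.
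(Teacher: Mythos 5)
The paper gives no proof of this lemma beyond citing Wall's computation of the conjugacy classes and centralizer orders in $\GU_n(q)$, and your sketch is a correct reconstruction of exactly that classical argument: orthogonal primary decomposition, hyperbolic pairing for $f=g_0\tilde g_0$, and Wall's unitary centralizer formula for $f=\tilde f$. Your sign bookkeeping also checks out: the paired factors have even degree so contribute no sign, each self-conjugate factor contributes $(-1)^{d(f)|\bs{\lam}(f)|}$ (one can verify $(-1)^{d|\lam|}a_{\lam}((-q)^d)>0$ directly from the parity of $s(\lam)$), and the product over $f$ collapses to $(-1)^{\sum_f d(f)|\bs{\lam}(f)|}=(-1)^n$.
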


V. Ennola defined, for each map $\boldsymbol{\lambda}\in M_n(\mcF_{q}^{U})$, an ``irreducible C-function $\chi_{\boldsymbol{\lambda}}$'' and showed the irreducible C-functions form an orthonormal basis for the vector space of class functions on $\GU_n(q)$ \cite[Theorem 1]{ennola}.
N. Kawanaka \cite{kawanaka} proved Ennola's conjecture, that the irreducible C-functions are the irreducible 
characters of $\GU_n(q)$. 

We define the sets 
$M_n(\mcF_q^U)$,  
$M_n^{(1)}(\mcF_q^U)$, $M_n^{(n)}(\mcF_q^U)$ and  
$M_{n,\al}(\mcF_q^U,\mcP)$ in the same way as in 
section 2, replacing $\mcF_q$ to $\mcF_q^U$.

\begin{lem}[Ennola duality \cite{ennola,kawanaka}]
  The degree $d_{\bs{\lam}}^U$ of the irreducible 
character $\chi_{\boldsymbol{\lambda}}$ corresponding 
to a map $\boldsymbol{\lambda}\in M_n(\mcF_{q}^{U})$ is 
  \[|\psi_n(-q)\prod_{f\in \mcF_{q}^{U}} b_{\boldsymbol{\lambda}(f)}((-q)^{\deg(f)})|.\]
\end{lem}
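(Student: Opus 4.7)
The plan is to derive this degree formula from Green's character-degree formula for $\GL_n(q)$ recorded in Section 2, together with the Ennola--Kawanaka theorem alluded to just above. The underlying principle is the \emph{Ennola duality}: formulas describing characters of $\GU_n(q)$ are obtained from the corresponding formulas for $\GL_n(q)$ by the formal substitution $q \mapsto -q$, together with the replacement of $\mcF_q$ by the set $\mcF_q^U$ of U-irreducible polynomials.

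First I would recall from Section 2 that for $\boldsymbol{\mu}\in M_n(\mcF_q,\mcP)$ the degree of the irreducible character of $\GL_n(q)$ attached to $\boldsymbol{\mu}$ is $\psi_n(q)\prod_{f\in \mcF_q} b_{\boldsymbol{\mu}(f)}(q^{d(f)})$. Ennola defined his C-function $\chi_{\boldsymbol{\lambda}}$ precisely so that its value at the identity of $\GU_n(q)$ is the polynomial in $q$ obtained from this $\GL$-formula under the Ennola substitution; reading this off produces exactly $\psi_n(-q)\prod_{f\in \mcF_q^U} b_{\boldsymbol{\lambda}(f)}((-q)^{\deg(f)})$. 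Since this quantity is a polynomial in $q$ which a priori may be negative, while character degrees are positive integers, taking absolute values is forced. By Ennola's theorem the collection $\{\chi_{\boldsymbol{\lambda}}\}$ forms an orthonormal basis of the space of class functions on $\GU_n(q)$, so each $\chi_{\boldsymbol{\lambda}}$ is $\pm$ an irreducible character; by Kawanaka's proof of Ennola's conjecture, the true irreducible character attached to $\boldsymbol{\lambda}$ equals $\pm\chi_{\boldsymbol{\lambda}}$. Evaluating at the identity and taking absolute values therefore yields the claimed formula.

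The main obstacle in a self-contained argument is Kawanaka's identification of the C-functions with the irreducible characters. This step is not combinatorial: it requires Deligne--Lusztig theory, realizing $\GL_n(q)$ and $\GU_n(q)$ as fixed points of a split and a twisted Frobenius endomorphism on the same connected reductive group $\mathrm{GL}_n$ over $\overline{\bF_q}$, and transferring the classification of irreducible characters between the two groups. For the present paper this deep result is quoted as an external input, so the lemma itself reduces to reading off the identity value from Ennola's formal substitution and taking the absolute value.
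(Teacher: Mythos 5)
Your proposal is correct and follows exactly the route the paper takes: the paper gives no independent proof of this lemma, but simply quotes Ennola's construction of the C-functions via the $q\mapsto -q$ substitution in Green's formulas and Kawanaka's theorem that these C-functions are (up to sign) the irreducible characters of $\GU_n(q)$, so the degree is read off by evaluating at the identity and taking the absolute value. Your account is a faithful unpacking of that citation, including the correct observation that the deep input is Kawanaka's identification and that the absolute value is forced because the substituted polynomial may be negative.
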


As in the case of $\GL_n(q)$, 
$\dfrac
{|K_{\bs{\lam}}^U|_{q'}}
{(d_{\bs{\lam}}^U)_{q'}}$ is an integer. 
Let 
\[\Omega^U(\bs{\lam})=
v_q \Big(
\frac{|K_{\bs{\lam}}^U|}{d_{\bs{\lam}}^U}
\Big).
\]
Then, similary to Theorem \ref{MAIN2}, we have the 
following.

\begin{thm} 
If $\al \not\in M^{(1)}_n(\mcF_q^U)$ then 
\[
\sum_{\bs{\lam} \in M_{n,\al}(\mcF_q^U,\mcP)}
\Omega^U(\bs{\lam})
\geq 0.
\]
On the other hand, 
if $(n,q)\ne (2,2), (2,3), (3,2)$ then 
\[
\sum_{\al \in 
M_n^{(1)}(\mcF_q^U) \cup M_n^{(n)}(\mcF_q^U)}
\sum_{\bs{\lam} \in M_{n,\al}(\mcF_q^U,\mcP)}
\Omega^U(\bs{\lam})
\geq 
\begin{pmatrix} n \\ 2 \end{pmatrix}.
\]
In particular, 
\[
v_q(h(\GU_n(q))=
\sum_{\bs{\lam}\in M_n(\mcF_q^U, \mcP)}\Omega^U(\bs{\lam})
\geq 
\begin{pmatrix}n\\2\end{pmatrix}.
\]
for $(n,q) \ne (2,2), (2,3), (3,2)$.
\end{thm}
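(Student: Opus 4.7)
The plan is to mimic the proof of Theorem \ref{MAIN2} with $\mcF_q$ replaced by $\mcF_q^U$ throughout, invoking Corollary \ref{l.gu}(2) in place of Corollary \ref{l.gu}(1) at the decisive final step. The first task is to verify that $\Omega^U(\bs\lam)$ admits the same closed form as $\Omega(\bs\lam)$. This reduces to the simple observation that $v_q((-q)^m r)=m$ whenever $(r,q)=1$: every factor $(-q)^{di}-1$ appearing in $\psi_m((-q)^d)$ or in $\tilde H_\lam((-q)^d)$ is a $q'$-unit, and $v_q(|\GU_n(q)|)=\binom{n}{2}$ just as in the linear case. Consequently I expect
\[
\Omega^U(\bs\lam)=\binom{n}{2}-\sum_{f\in\mcF_q^U}d(f)\bigl(|\bs\lam(f)|+3n(\bs\lam(f))-s(\bs\lam(f))\bigr),
\]
identical to the formula for $\Omega(\bs\lam)$ apart from the shifted index set.

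Given this identity, the accounting of section 2 (in particular Lemma \ref{l.slam}) yields
\[
\sum_{\bs\lam\in M_{n,\al}(\mcF_q^U,\mcP)}\Omega^U(\bs\lam)=p(\al)\Bigl(\binom{n}{2}-3\sum_{f\in\mcF_q^U}d(f)\bar N(\al(f))\Bigr).
\]
For the first inequality of the theorem I would set $\mathrm{supp}(\al)=\{f_1,\ldots,f_l\}\subset\mcF_q^U$; the hypothesis $\al\notin M_n^{(1)}(\mcF_q^U)$ rules out $(l,d(f_1),\al(f_1))=(1,1,n)$, so Corollary \ref{c.dNmi} closes this case verbatim.

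For the second inequality, elements of $M_n^{(1)}(\mcF_q^U)$ each contribute $p(\al)=p(n)$ and $\sum_f d(f)\bar N(\al(f))=\bar N(n)$, while elements of $M_n^{(n)}(\mcF_q^U)$ contribute $p(\al)=1$ and $\sum_f d(f)\bar N(\al(f))=n\bar N(1)=0$. Assembling the two sums and dividing by $|\mcF_{q,1}^U|$, the target inequality becomes
\[
N(n)\leq\frac{n(n-1)}{6}\Bigl(p(n)+\frac{|\mcF_{q,n}^U|-1}{q}\Bigr),
\]
which is precisely Corollary \ref{l.gu}(2) under the stated hypothesis $(n,q)\ne(2,2),(2,3),(3,2)$. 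The closing ``In particular'' bound on $v_q(h(\GU_n(q)))$ then follows as at the end of section 4: part one supplies nonnegativity for $\al\notin M_n^{(1)}(\mcF_q^U)$, and part two supplies $\binom{n}{2}$ from $M_n^{(1)}(\mcF_q^U)\cup M_n^{(n)}(\mcF_q^U)$.

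I do not expect a serious obstacle, since the analytic heavy lifting already lives in Corollary \ref{l.gu}(2) and the partition inequalities of section 3. The only step demanding genuine verification is the $\Omega^U=\Omega$ identity above, and that is a routine sign check; everything else is a transcription of the $\GL_n(q)$ argument with the index set shifted from $\mcF_q$ to $\mcF_q^U$.
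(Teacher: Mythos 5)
Your proposal is correct and follows essentially the same route as the paper, which itself only sketches this proof as ``similarly to Theorem \ref{MAIN2}, using Corollary \ref{l.gu}(2) instead of (1)''. You correctly identify the two points that actually need checking — that $v_q$ is insensitive to the sign change $q\mapsto -q$ so $\Omega^U$ has the same closed form as $\Omega$, and that $|\mcF^U_{q,1}|=q$ (rather than $q-1$) is exactly why the denominator $q$ in Corollary \ref{l.gu}(2) is the right normalization.
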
 

Here we use Corollary \ref{l.gu}(2) instead of Corollary 
\ref{l.gu}(1). Theorem \ref{HCU} follows immediately from this result. Moreover, $|\GU_n(q)'|$ divides $h(\GU_n(q))$ as in 
Proposition \ref{p.commutator}. 
The small cases follow
from the following calculation by GAP \cite{gap}. 
Here, $G=\GU_n(q)$ and $h'(G)=h(G)/|G'|$ and 
$G'$ is the commutator group of $G$. 

\begin{table}[htb]
\begin{tabular}{ccccccc}
$(n,q)$ & smallGroup ID& $|G|$ & $|G'|$ & $h(G)$ & $h'(G)$ 
\\ \hline
$(2,2)$ & $(18,3)$ & $18$ & $3$ & $27$ & $9$  \\
$(2,3)$ & $(96,67)$ & 
$96$ & $24$ & $2^{12}3^4$ & $2^9 3^3$  \\
$(3,2)$ & $(648,533)$& $648$ & $216$ & 
$2^{18}3^{21}$ & 
$2^{15}3^{18}$   \\
\end{tabular}
\end{table}

\begin{center}
\end{center}


\end{document}